\newcommand\sw@temp{\phi}
\let\phi\varphi
\let\varphi\sw@temp
\renewcommand\sw@temp{\epsilon}
\let\epsilon\varepsilon
\let\varepsilon\sw@temp
\let\emptyset\varnothing
\newcommand{\naturals}{\mathbb{N}}
\newcommand{\reals}{\mathbb{R}}
\newcommand{\rationals}{\mathbb{Q}}
\DeclarePairedDelimiterX{\set}[1]\lbrace\rbrace{\setaux #1||\endsetaux}
\def\setaux#1|#2|#3\endsetaux{\if\relax\detokenize{#2}\relax #1 \else #1
\;\delimsize\vert\; #2 \fi}
\newcommand{\bigO}{\mathcal O}
\newcommand\ecolnumf[1]{\ensuremath{\chi'_{f}(#1)}}
\newcommand\ecolnum[1]{\ensuremath{\chi'(#1)}}
\newcommand\mcf[1]{\ensuremath{\mathrm{mc}_{f}(#1)}}
\newcommand\mc[1]{\ensuremath{\mathrm{mc}(#1)}}
\newcommand\maxdeg[1]{\ensuremath{\Delta_\text{max}(#1)}}
\newcommand\mindeg[1]{\ensuremath{\Delta_\text{min}(#1)}}
\newcommand\pu[1]{\ensuremath{p_U^\mathrm{#1}}}
\newcommand\pr[1]{\ensuremath{p_R^\mathrm{#1}}}
\newtheorem{theorem}{Theorem}
\newtheorem{lemma}[theorem]{Lemma}
\newtheorem{corollary}[theorem]{Corollary}
\theoremstyle{definition}
\newtheorem*{example}{Example}
\begin{document}

\title{Fairness in Graph-Theoretical Optimization Problems}

\author[C. Hojny et al.]{Christopher Hojny, Frits Spieksma and Sten Wessel}
\address{Department of Mathematics and Computer Science,\\Eindhoven University of Technology, Eindhoven, The Netherlands}
\email{\{c.hojny,f.c.r.spieksma,s.wessel\}@tue.nl}

\begin{abstract}
There is arbitrariness in optimum solutions of graph-theoretic problems that can give rise to unfairness.
Incorporating fairness in such problems, however, can be done in multiple ways.
For instance, fairness can be defined on an individual level, for individual vertices or edges of a given graph, or on a group level.
In this work, we analyze in detail two individual-fairness measures that are based on finding a probability distribution over the set of solutions.
One measure guarantees uniform fairness, i.e., entities have equal chance of being part of the solution when sampling from this probability distribution.
The other measure maximizes the minimum probability for every entity of being selected in a solution.
In particular, we reveal that computing these individual-fairness measures is in fact equivalent to computing the fractional covering number and the fractional partitioning number of a hypergraph.
In addition, we show that for a general class of problems that we classify as independence systems, these two measures coincide.
We also analyze group fairness and how this can be combined with the individual-fairness measures.
Finally, we establish the computational complexity of determining group-fair solutions for matching.
\end{abstract}

\keywords{Fairness, Column generation, Matching, Independent set, Set systems}

\maketitle

\section{Introduction} \label{sec:introduction}
Traditionally, when confronted with an instance of an optimization problem,
the instance is considered solved when a provably optimum solution has been
found.
After all, what more can be wished for?
Actually, there is more.
Indeed, we should acknowledge that there can be a large amount of
arbitrariness in selecting an optimum solution, and that this arbitrariness
can be perceived as a source of unfairness.
The following example is inspired by Farnadi et al.~\cite{starnaud2023adaptation}.
Imagine one is a patient with end-stage renal disease, and that there is a donor who is willing to donate a kidney but who is incompatible with the patient.
Imagine further that you and your incompatible donor enter a kidney exchange program; we will refer to the pair entering the program as a node.
Entering this program means that at regular moments, runs of software are made, and the output of such a run consists of a set of exchanges between nodes~\cite{biro2019building,starnaud2023adaptation}.
A patient from a node involved in such an exchange receives a kidney, other patients do not.
Clearly, it is quite relevant whether one is included in such an exchange or not.
While the software may find an optimum solution (according to some criterion, say maximizing the number of exchanges), it can be the case that some node is not selected for an exchange while being part of \emph{some} optimum solution.
Selecting a fixed exchange plan will certainly be perceived as unfair by every patient who does not receive a kidney.
In such cases it can be hard to explain that the software simply selects an optimum solution which somehow favors some nodes at the expense of some other nodes.
We give other examples of this phenomenon in Section~\ref{sec:applications}.
In this contribution, we pursue this matter from a fairness point of view.

To overcome this issue, we analyze an approach that has been used by,
among others, Farnadi et al.~\cite{starnaud2023adaptation} and Flanigan et al.~\cite{flanigan2021fair}.
Instead of proposing a fixed solution, the idea is to consider a pool of solutions from
which a solution is sampled according to some probability distribution.
In this way, different entities of the optimization problem (e.g., patients in
the kidney exchange example) are contained in a selected solution with a certain probability.
To model fairness, the probability distribution is required to satisfy some
additional criteria.
For instance, one could enforce that the likelihood~$p$ for each individual
entity to be present in a sampled solution is the same.
Among all such distributions, one is then interested in finding a
distribution that maximizes~$p$.
This maximum~$p$ is called \emph{uniform individual
  fairness}, denoted by~$p_U$, and can be found by solving a suitable
linear program (LP), similar to the approach in~\cite{starnaud2023adaptation,flanigan2021fair} (see Section~\ref{sec:model}).
Alternatively, one could also try to maximize the minimum probability of an
entity to be present in a solution, the so-called \emph{Rawlsian justice},
named after the fairness principles introduced by
Rawls~\cite{rawls1971theory}, denoted by~$p_R$.

Although the aforementioned sampling approach is not new, it has mostly
been studied from a computational point of view.
In this article, we take a fresh look into this approach.
While existing results in the literature mostly considered the approach by Flanigan et al.~\cite{flanigan2021fair} for particular problems, we aim to understand the approach on a structural level.
Among others, we ask the following central questions.
What is the complexity of finding a fair probability distribution?
Is there a characterization of~$p_U$ and~$p_R$ in terms of graph-theoretic parameters?
By interpreting solutions of graph-theoretic problems as set systems, we show that these fairness measures are connected to known concepts from hypergraph theory.
For a large relevant class of problems these set systems are actually so-called independence systems.
Informally this means that if~$A$ is the set of all entities modeled in an  optimization problem and~$X \subseteq A$ is a solution of the optimization
problem, then also every subset of~$X$ is a solution.
In Section~\ref{sec:independence-systems} we provide a formal description.

Other than arbitrariness in selecting a solution, unfairness can also be introduced through systemic bias in the definition of the model or the description of the problem.
For instance, for the kidney exchange example, patients with certain blood types are more likely to be matched, simply because they are compatible with a larger group of donors.
To achieve fairness among different types of patients, one can ensure representation of protected groups in the solution that is selected.
This is referred to as \emph{group fairness}.
Not much existing work combines individual fairness with group fairness, while in this work we show how both fairness approaches can be combined in a single framework.

Our contributions, answering the central questions above, can be summarized as follows:
\begin{itemize}
    \item We show how the individual-fairness measures $p_U$ and $p_R$ are connected to known concepts in fractional hypergraph theory.
    \item We show that for independence systems, $p_U = p_R$.
    \item We show that when the underlying optimization problem can be solved in polynomial time, then finding $p_U$ and $p_R$ can also be done in polynomial time, as well finding as the probability distributions achieving $p_U$ and $p_R$.
    \item We show how group fairness constraints can be applied to graph-theoretic problems, and how it can be combined with individual fairness.
    \item We present complexity results for group-fair matching problems.
\end{itemize}

We organize the paper as follows.
In the remainder of Section~\ref{sec:introduction}, we introduce notation and terminology (Section~\ref{subsec:notation}), and we discuss a number of problems and applications that motivate fairness in graph-theoretical optimization.
We then give an overview of related work in Section~\ref{subsec:related-work}.
In Section~\ref{sec:model}, we describe the general framework of modeling individual and group fairness in these type of problems, show the relation of the two individual fairness measures with hypergraph theory (Section~\ref{sec:hypergraph}), and analyze the complexity of computing the fairness measures (Section~\ref{sec:complexity}).
Then, in Section~\ref{sec:independence-systems} we apply the framework to a general type of problems we refer to as independence systems, where we show a main result that~$p_U = p_R$.
This is then applied to concrete graph-theoretical problems in Section~\ref{subsec:fair-matching:edges} and~\ref{subsec:independent-set}, followed by Section~\ref{sec:matching-vertex} where we discuss an additional application that is not an independence system.
In Section~\ref{sec:gf} we formalize group fairness in a general setting for graph-theoretical problems, and analyze the complexity of finding group-fair matchings in a graph.
We give an outlook for further directions of research in Section~\ref{sec:conclusion}.

\subsection{Notation and Terminology} \label{subsec:notation}
In this section, we introduce notation and terminology that we will use throughout this work.
Let~$G = (V, E)$ be a graph with finite vertex set~$V$ and edge set~$E \subseteq \binom{V}{2}$.
Two vertices are \emph{adjacent} if there is an edge between them.
For a vertex~$v \in V$, let~$N(v) \coloneqq \set{u : \set{u, v} \in E}$ denote the set of \emph{neighbors} of~$v$.
A vertex is \emph{isolated} when it has no neighbors.
Let~$\delta(v) \coloneqq \set{\set{u, v} : u \in N(v)}$ denote the set of edges \emph{incident} to~$v$.
The \emph{degree} of a vertex~$v$ is the number of incident edges, i.e., $|\delta(v)|$.
The minimum and maximum degree of the graph are denoted by~$\mindeg G$ and~$\maxdeg G$, respectively.
For a given subset~$S \subseteq V$, let~$N(S) \coloneqq \set{v : \set{u, v} \in E,\ u \in S,\ v \not\in S}$ denote the \emph{(open) neighborhood} of the set~$S$.

A \emph{subgraph} of $(V, E)$ is a graph~$G'=(V', E')$ where~$V' \subseteq V$ and~$E' \subseteq E$ and~$V' \supseteq \bigcup_{e \in E'} e$.
A subgraph is a \emph{spanning subgraph} when~$V' = V$.
The subgraph induced by the vertex set~$S$, denoted by $G[S]$, is the graph on vertex set~$S$ and exactly the edges from~$E$ that have both endpoints in~$S$.

We call a graph \emph{regular} or \emph{regular with degree~$d$} when every vertex has the same degree~$d$.
A graph is \emph{connected} when for every pair of distinct vertices~$u,v \in V$ there is a sequence of adjacent vertices, starting at~$u$ and ending in~$v$.
A \emph{(connected) component} of a graph is a connected subgraph that is maximal, in the sense that it is not part of a larger connected subgraph.
A sequence of adjacent vertices is a \emph{path} if it does not visit a vertex more than once.
A \emph{cycle} is a sequence of adjacent vertices that starts and ends at the same vertex and does not visit a vertex more than once.
A graph is \emph{Hamiltonian} if it contains an Hamiltonian cycle, a cycle that visits every vertex exactly once.

The \emph{complete graph} on~$n$~vertices, denoted by~$K_n$, is the graph on~$n$ vertices where there is an edge between every pair of vertices.
A graph is a \emph{cycle graph} when it is connected and regular with degree~$2$.

\subsection{Problems and Applications}
\label{sec:applications}
We describe here three problems and applications motivating the notion of fairness in graph-theoretical optimization.

First, the application of the kidney exchange program, as introduced above, can be modeled as a graph where vertices are patient-donor pairs and the edges represent compatibility between the donors with patients of the two pairs, see for a general overview Biro et al.~\cite{biro2019building}.
Finding a solution corresponds to the the well-known matching problem in this compatibility graph.
Given a graph $G=(V,E)$, a \emph{matching} is a subset of edges of the graph, such that no two
edges in the matching have a vertex in common.
A matching is \emph{maximum} if there is no matching with higher cardinality.
A matching is \emph{perfect} if it has cardinality~$\frac{|V|}{2}$, i.e., the
matching covers all vertices of the graph.
The \emph{matching number}~$\nu(G)$ is the maximum cardinality of any matching in the graph~$G$.
A vertex of the graph is \emph{covered} by a matching if it is an endpoint of one of the matching's edges.

We are interested in generating a matching in a given graph~$G$, with the property that our solution is \emph{fair} according to the following principle: every vertex in the graph has a priori an equal probability of being covered by the matching generated by our procedure.
We call this property \emph{uniform} or \emph{individual fairness}.
We achieve this by computing a probability distribution over the set of all matchings of the graph, such that sampling from this probability distribution indeed ensures that every vertex has equal probability, say~$p$, of being covered by the sampled matching.
We call this problem \emph{fair matching for vertices}.

Second, consider again the matching problem for some given graph~$G=(V,E)$. Now, however, we aim to establish a probability distribution over the matchings such that each \emph{edge} in $E$ has an equal probability of being present in a matching sampled from this probability distribution. Thus, in this setting, our solution is fair if each edge in~$E$ has equal chance of being present in a selected matching. We call this problem \emph{fair matching for edges}.

As a different application, consider a wireless (ad hoc) network, where devices (or the nodes of the network) transmit
messages to each other via radio signals~\cite{liu2009clique}.
A device has a certain transmission power, which dictates the range of how
``far'' it can broadcast a message.
When multiple devices that are in each other's range broadcast simultaneously, the
transmission becomes jammed.
In the corresponding \emph{conflict graph}, nodes correspond to devices, and two nodes are adjacent when a simultaneous
broadcast is not possible.
A strategy to avoid simultaneous broadcasting by adjacent nodes is to design a schedule where time\-slots
are assigned to nodes in such a way that they can safely transmit.
The nodes assigned per timeslot are so-called \emph{independent sets} in the conflict graph.
Next, a probability distribution over the independent sets can be seen as such a
schedule, where it can be important that every node gets the same
transmission time (to, e.g., balance energy consumption between the nodes), see, among others, \cite{liu2009clique}, \cite{fineman2014fair}, and \cite{huaizhou2013fairness} for a general overview of fairness in wireless (ad-hoc) networks.
Thus, given a graph~$G=(V,E)$ the goal is to select a maximum cardinality subset of the vertices such that each pair  of selected vertices is not connected by an edge from~$E$. The goal is to find a probability distribution over the independent sets such that each vertex has equal probability to be in a selected independent set. We call this problem \emph{fair independent set}.

Another application is centered around combinatorial auctions, see Cramton et al.~\cite{cramtonetalbook} for an overview.
Informally stated, there is a number of bidders as well as a set of items.
A bidder has a valuation, either implicitly or explicitly, for every subset of items (called a bundle).
Assuming these valuations are known, it is the auctioneer's task to find a maximum-valued assignment of bundles to bidders such that each item is part of at most one bundle.
Combinatorial auctions are well-known both in scientific literature, as well as in practice (spectrum, housing, etc.) and may involve huge amounts of money~\cite{cramtonetalbook}.
When confronted with the existence of multiple maximum-valued assignments where in one solution a bidder may receive an empty bundle, and in another one a nonempty bundle, legal repercussions are known to happen, see Goossens et al.~\cite{goossens2014auctions}.
One way of dealing with this phenomenon is randomization over (optimum) solutions, which we call \emph{procedural} or \emph{ex-ante} fairness.

In this work, we aim to apply randomization in such a way that we achieve individual ex-ante uniform fairness or Rawlsian justice.
At the same time, we achieve group fairness by only considering solutions that satisfy group fairness constraints, which is a form of \emph{ex-post} group fairness.

\section{Related Work} \label{subsec:related-work}
The design and analysis of fair algorithms in decision making has recently gained increasing attention in the research community.
The concept of \emph{individual fairness}, which was first introduced by Dwork et al.~\cite{Dwork2012} ensures that ``similar'' individuals should be treated similarly.
For classification problems, this is known as \emph{fairness through awareness}~\cite{Dwork2012}.
In our work, we consider every individual to be similar to every other individual, i.e., we make no distinction between any individual as we do not assume any distinguishing data or properties to be defined for individuals.
Literature on individual fairness in graph optimization includes
Garc\'ia-Soriano and Bonchi~\cite{GarciaSoriano2020a} who present a combinatorial algorithm to find a probability distribution over the matchings, in bipartite graphs, that achieves individual fairness with respect to a fairness measure that lexicographically maximizes the probabilities of vertices being included in a one-sided maximum matching.
They furthermore show theoretical properties of this measure for general problems where the solutions form a matroid.
Garc\'ia-Soriano and Bonchi~\cite{GarciaSoriano2021} extend similar results to ranking under group fairness constraints, and state a complexity result for general combinatorial problems for this fairness measure.
In our work, we extend this complexity result for the Rawlsian justice and uniform fairness measures in the general class of graph optimization problems, and compare Rawlsian justice with uniform fairness.
We show theoretical results for the broad class of independence systems, which generalize matroids.

Other work concerning individual fairness in a combinatorial setting include Rawlsian justice for online bipartite matching (Esmaeili et al.~\cite{Esmaeili2023}), selecting citizen assemblies (Flanigan et al.~\cite{flanigan2021fair} and Flanigan et al.~\cite{Flanigan2021}), and kidney exchange problems (Farnadi et al.~\cite{starnaud2023adaptation}).
A related approach is to address fairness while restricting to only optimal solutions.
In this case, there is no tradeoff between optimality and fairness, but the notion of individual fairness is weaker.
This has recently been applied for general integer programming problems by Demeulemeester et al.~\cite{Demeulemeester2023}. 

A large part of the literature considers \emph{group fairness}, where fairness constraints enforce that certain groups are protected from discrimination against a sensitive property such as race, gender, age, or income.
See for example the popular group fairness measures \emph{demographic parity} in Dwork et al.~\cite{Dwork2012}, and Calders and Verwer~\cite{Calders2010}, \emph{equal opportunity} in Moritz et al.~\cite{moritz2016}, and \emph{equalized odds} in Mortiz et al.~\cite{moritz2016}.
Much of the group fairness literature is applied to machine learning and classification, see for example the recent surveys by Mehrabi et al.~\cite{Mehrabi2021}, and Pessach and Shmueli~\cite{Pessach2022}.
Relevant literature on group fairness in combinatorial problems include Chierichetti et al.~\cite{chierichetti19a}, who give an approximation algorithm for group-fair solutions in matroids and matroid intersection.
Group fairness is also applied to shortest-path problems, where in a vertex-colored graph the number of colors in the selected path must be balanced (Bentert et al.~\cite{BENTERT2022149}).
Bandyapadhyay et al.~\cite{faircovhit} consider group fairness in geometric covering and hitting problems, and present an approximation algorithm for vertex cover and an exact polynomial-time algorithm for edge cover under group fairness constraints.

Combining individual fairness with group fairness is not well studied in the existing literature.
Garc\'ia-Soriano and Bonchi~\cite{GarciaSoriano2021} combine individual fairness with group fairness for the ranking problem, and provide some computational results.

\section{A Generic Framework for Modeling Fairness in Graphs} \label{sec:model}
In this section, we describe a generic framework for modeling fairness in combinatorial optimization problems, motivated by the three three settings that relate to the applications described earlier.
There is a common generalization of these three settings for which we formulate a model for finding a probability distribution
over the set of feasible solutions of a graph-theoretical optimization problem.
This model resembles fairness models for concrete problems that exist in the literature~\cite{starnaud2023adaptation,flanigan2021fair}.
Let~$A$ be a given \emph{ground set}, and let~$M$ be a family of
subsets of the ground set~$A$.
We will assume throughout that~$\emptyset \in M$ and that every element~$a \in A$ is contained in some set~$m \in M$.
We call the tuple~$(A, M)$ a \emph{set system}.

\begin{example}
    The fair matching for vertices problem arises when the ground set $A$ coincides with the vertex set $V$, and when $M$ coincides with the collection of subsets of vertices that are
    covered by matchings in~$G$.
    Further, fair matching for edges arises when the ground set~$A$ coincides with the edge set $E$, and when $M$ coincides with the collection of subsets of edges that are matchings in~$G$.
    Finally, when the ground set $A$ coincides with the vertex set $V$, and $M$ coincides with the collection of vertex sets that are independent sets, we arrive at the problem of finding fair independent sets in~$G$.
\end{example}

To achieve individual fairness, we want to find a probability distribution~$\set{x_m}_{m \in M}$ over the subsets in~$M$,
such that if we sample according to~$x$, every element~$a \in A$ has
equal probability~$p$ to be in the sampled subset.
Our aim is to maximize this probability, which we will call~$p_U$ (for \emph{uniform} probability).
For a given element~$a \in A$, let~$M_a \subseteq M$ denote the
collection of subsets from~$M$ that contain the element~$a$.
We model the problem of maximizing~$p_U$ with the following LP\@.
\begin{subequations}
    \label{eq:lp:pu}
    \begin{alignat}{9}
        p_U =\quad &&\text{maximize}\quad && p &&& \\
        &&\text{subject to}\quad && \sum_{m \in M_a} x_m &= p
        &&\qquad \forall {a \in A}, && \label{cons:pu:uniform} \\
        &&&& \sum_{m \in M}x_m &= 1, && && \label{cons:pu:probdist1}\\
        &&&& x_m &\ge 0 &&\qquad\forall {m \in M}, \label{cons:pu:probdist2}\\
        &&&& p &\in \reals. &&
    \end{alignat}
\end{subequations}
Constraints~\eqref{cons:pu:uniform} ensure that each element~$a \in A$ is selected with uniform probability, while Constraint~\eqref{cons:pu:probdist1} and Constraints~\eqref{cons:pu:probdist2} ensure that~$x$ is a probability distribution over~$M$.

We also consider a variant of this problem, where our notion of fairness
is relaxed to maximizing the minimum probability~$p_R$ that a ground set
element is selected.
This is also referred to as Rawlsian justice, and can be modeled with
the following LP\@.
\begin{subequations}
    \label{eq:lp:pr}
    \begin{alignat}{7}
        p_R =\quad &&\text{maximize}\quad && p &&& \\
        &&\text{subject to}\quad && \sum_{m \in M_a} x_m &\ge p
        &&\qquad \forall {a \in A}, \\
        &&&& \sum_{m \in M}x_m &= 1, &&\label{eq:lp:pr:probdist}\\
        &&&& x_m &\ge 0 &&\qquad\forall {m \in M},\\
        &&&& p &\in \reals. &&
    \end{alignat}
\end{subequations}
In this problem, it is guaranteed that~$p_R > 0$ (due to our assumption that every vertex is covered by at least one subset in~$M$).
In general, this is not the case for~$p_U$, which might equal zero, as we will discuss below in detail.

To include group fairness in this framework, we can ensure representation of groups in any solution in the support of the probability distribution, which can be done by restricting~$M$ to only solutions that meet the representation criteria.
We discuss in more detail how this can be done in Section~\ref{sec:gf}.
For the remainder of Section~\ref{sec:model} and Section~\ref{sec:if}, we first focus on individual fairness only.

\subsection{Relation to Fractional Hypergraph Theory} \label{sec:hypergraph}
In this section, we describe how the fairness measures $p_U$ and $p_R$ are related to concepts developed in fractional hypergraph theory.
We refer to Scheinerman and Ullman~\cite{scheinerman2011fractional}, and F\"uredi~\cite{furedi1988matchings} for an overview of fractional graph and hypergraph theory.
The main idea is to see the set system~$(A, M)$ as a \emph{hypergraph}~$\mathcal H$
with vertex set~$A$ and hyperedges~$M$.
A \emph{partition} of a hypergraph~$\mathcal H$ is a collection of disjoint hyperedges
such that every vertex is incident to exactly one hyperedge.
The \emph{partitioning number}~$k^{=}(\mathcal H)$ is the minimum
cardinality of a partition, if it exists, see~\cite{scheinerman2011fractional}.
If the hypergraph has no partition, we use the convention to define~$k^{=}(\mathcal H) = \infty$.
A \emph{fractional partition} of a hypergraph is an assignment of
non-negative weights~$w \in \reals_+^{M}$ to the hyperedges such that
for every vertex the sum of weights of the incident hyperedges is exactly~$1$.
The \emph{fractional partitioning number}~$k^{=}_f(\mathcal H)$ is then the minimum weight of a fractional partition.
The fractional partitioning number can be formulated as the following LP\@.
\begin{subequations}
    \label{eq:lp:kfh}
    \begin{alignat}{7}
        k^{=}_f(\mathcal H) =\quad &&\text{minimize}\quad && \sum_{m \in M} w_m &&& \\
        &&\text{subject to}\quad && \sum_{m \in M_a} w_m &= 1 \label{cons:kfh:eq}
        &&\qquad \forall {a \in A}, \\
        &&&& w_m &\ge 0 &&\qquad\forall {m \in M}.
    \end{alignat}
\end{subequations}
Note that indeed when a fractional partition does not exist, the program is infeasible.

Determining~$p_U$ can then be alternatively formulated as determining the fractional partitioning number, because of the following relation.
\begin{lemma} \label{lemma:pu-frac-hypergraph-part}
    Let~$\mathcal H = (A, M)$ be a hypergraph.
    Then it holds that~$p_U = \frac{1}{k^{=}_f(\mathcal H)}$ if~$k^{=}_f(\mathcal H)$ is finite, and~$p_U = 0$ otherwise.
\end{lemma}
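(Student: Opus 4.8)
The plan is to exhibit an explicit, scaling-based correspondence between the feasible solutions of~\eqref{eq:lp:pu} with $p>0$ and the fractional partitions of~$\mathcal H$, and then simply compare the two optimal values. First I would record two preliminary facts about the LP~\eqref{eq:lp:pu}. It is feasible: putting all mass on the empty set, $x_\emptyset = 1$ and $x_m = 0$ for $m \neq \emptyset$, is allowed since $\emptyset \in M$, and it forces $p = 0$ because $\emptyset \notin M_a$ for every $a \in A$. It is also bounded: fixing any $a \in A$, constraint~\eqref{cons:pu:uniform} together with $\sum_{m\in M}x_m = 1$ and $x \ge 0$ gives $p = \sum_{m\in M_a} x_m \le 1$. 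As $M$ is finite, $p_U$ is thus a well-defined value in $[0,1]$, attained by some optimal $(x,p)$. (I assume $A \neq \emptyset$; otherwise the statement is degenerate.)

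Next I would establish two inequalities. For the first, suppose $p_U > 0$ and let $(x,p_U)$ be optimal; setting $w_m \coloneqq x_m/p_U \ge 0$ gives $\sum_{m\in M_a} w_m = p_U/p_U = 1$ for every $a \in A$, so $w$ is a fractional partition, of weight $\sum_{m\in M} w_m = 1/p_U$, whence $k^{=}_f(\mathcal H)$ is finite and $k^{=}_f(\mathcal H) \le 1/p_U$. For the second, suppose $k^{=}_f(\mathcal H)$ is finite and let $w$ be an optimal fractional partition with $k \coloneqq \sum_{m\in M} w_m = k^{=}_f(\mathcal H)$. Choosing any $a \in A$ yields $k \ge \sum_{m\in M_a} w_m = 1 > 0$, so $x_m \coloneqq w_m/k$ and $p \coloneqq 1/k$ is feasible for~\eqref{eq:lp:pu} with objective value $1/k$, giving $p_U \ge 1/k^{=}_f(\mathcal H)$.

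Finally I would combine these. If $k^{=}_f(\mathcal H)$ is finite, the second inequality gives $p_U \ge 1/k^{=}_f(\mathcal H) > 0$, so the first inequality applies and yields $k^{=}_f(\mathcal H) \le 1/p_U$, i.e.\ $p_U \le 1/k^{=}_f(\mathcal H)$; together with the reverse inequality this gives $p_U = 1/k^{=}_f(\mathcal H)$. If $k^{=}_f(\mathcal H) = \infty$, then $p_U > 0$ is ruled out by the first inequality, so $p_U = 0$. I do not expect a genuine obstacle here: the argument is a routine rescaling equivalence between two linear programs. The only points requiring care are the degenerate ones---confirming that $p_U$ is finite and nonnegative, so that ``$p_U = 0$'' is the correct conclusion when no fractional partition exists, and checking that the total weight of an optimal fractional partition is strictly positive before dividing by it, which is exactly where the standing assumption that every $a \in A$ lies in some $m \in M$ is used.
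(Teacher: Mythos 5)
Your proposal is correct and follows essentially the same route as the paper: the same bidirectional rescaling $w_m = x_m/p_U$ and $x_m = w_m/k^{=}_f(\mathcal H)$ between optimal solutions of~\eqref{eq:lp:pu} and~\eqref{eq:lp:kfh}, yielding the two inequalities $k^{=}_f(\mathcal H) \le 1/p_U$ and $p_U \ge 1/k^{=}_f(\mathcal H)$. The only (harmless) difference is that you treat the case $k^{=}_f(\mathcal H)=\infty$ contrapositively via the first inequality, and you are slightly more careful than the paper in verifying $p_U>0$ before dividing by it; no gap.
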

\begin{proof}
    We first handle the case where~$k^{=}_f(\mathcal H) = \infty$.
    Then, the LP~\eqref{eq:lp:kfh} is infeasible.
    By multiplying the constraints~\eqref{cons:kfh:eq} with a given positive constant~$p$, it follows that there is no~$w \in \reals_+^M$ such that~$\sum_{m \in M_a} w_m = p$ for all~$a \in A$.
    Therefore, the only feasible solution for~\eqref{eq:lp:pu} is~$x_\emptyset = 1$ and~$x_m = 0$ for all~$m \in M\setminus\set{\emptyset}$, yielding that~$p_U = 0$.

    In case~$k^{=}_f(\mathcal H)$ is finite,
    let~$x^*$ be an optimal solution for~\eqref{eq:lp:pu}.
    Define~$w_m \coloneqq \frac{x^*_m}{p_U}$, for every~$m \in M$.
    Note that~$w$ is feasible for \eqref{eq:lp:kfh}, and
    moreover Constraint~\eqref{cons:pu:probdist1} yields that~$w$
    corresponds to an objective value of~$\frac{1}{p_U}$.
    Hence, $k^{=}_f(\mathcal H) \le \frac{1}{p_U}$.

    Further, let $w^*$ be an optimal solution
    for~\eqref{eq:lp:kfh}, and define~$x_m
    \coloneqq \frac{w_m^*}{k^{=}_f(\mathcal H)}$.
    This yields a solution for \eqref{eq:lp:pu} with objective value equal
    to~$\frac{1}{k^{=}_f(\mathcal H)}$.
    Hence, $p_U \ge \frac{1}{k^{=}_f(\mathcal H)}$, and the result follows.
\end{proof}

One can also define a \emph{cover} of a hypergraph as a
selection of hyperedges such that every vertex is in at least one hyperedge.
The \emph{covering number}~$k^{\ge}(\mathcal H)$ is the minimum
cardinality of a cover.
A \emph{fractional cover} of a hypergraph is an assignment of
nonnegative weights~$w \in \reals_+^{M}$ to the hyperedges such that
for every vertex the sum of weights of the incident hyperedges is at
least~$1$.
The \emph{fractional covering number}~$k^{\ge}_f(\mathcal H)$ is then the minimum weight of a fractional cover.
This is the fractional hypergraph invariant that corresponds to~$p_R$, as stated in the following lemma.
Notice that~$p_R > 0$ and also~$k^{\ge}_f(\mathcal H)$ is finite, as a covering always exists under our assumption that every~$a \in A$ is in at least one~$m \in M$.
The completion of the argument is analogous to the proof of Lemma~\ref{lemma:pu-frac-hypergraph-part}.
\begin{lemma} \label{lemma:pr-frac-hypergraph-cov}
    Let~$\mathcal H = (A, M)$ be a hypergraph.
    Then it holds that~$p_R = \frac{1}{k^{\ge}_f(\mathcal H)}$.
\end{lemma}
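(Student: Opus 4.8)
The plan is to replay the proof of Lemma~\ref{lemma:pu-frac-hypergraph-part} with the partition equalities relaxed to cover inequalities; the whole argument is just a scaling between the two linear programs \eqref{eq:lp:pr} and \eqref{eq:lp:kfh}. First I would record the two preliminary facts already highlighted in the text. Since every $a \in A$ lies in some $m \in M$, assigning weight $1$ to every hyperedge gives a fractional cover, so \eqref{eq:lp:kfh} is feasible and $k^{\ge}_f(\mathcal H)$ is finite; and putting all mass on a single $m$ containing a fixed element shows that the optimum $p_R$ of \eqref{eq:lp:pr} is strictly positive. Both programs range over the finite index set $M$ and are bounded (for \eqref{eq:lp:pr}, note $p \le 1$), so both optima are attained.

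For the bound $k^{\ge}_f(\mathcal H) \le \tfrac{1}{p_R}$, let $x^*$ be optimal for \eqref{eq:lp:pr}. Because $p_R > 0$, I may define $w_m \coloneqq x^*_m / p_R$ for all $m \in M$. The constraints $\sum_{m \in M_a} x^*_m \ge p_R$ scale to $\sum_{m \in M_a} w_m \ge 1$ for every $a \in A$, so $w$ is feasible for \eqref{eq:lp:kfh}, and $\sum_{m \in M} x^*_m = 1$ gives it objective value $\tfrac{1}{p_R}$. For the reverse bound, let $w^*$ be optimal for \eqref{eq:lp:kfh}, so that $\sum_{m \in M} w^*_m = k^{\ge}_f(\mathcal H)$, a finite positive number. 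Setting $x_m \coloneqq w^*_m / k^{\ge}_f(\mathcal H)$ yields $\sum_{m \in M} x_m = 1$ and $\sum_{m \in M_a} x_m \ge \tfrac{1}{k^{\ge}_f(\mathcal H)}$ for all $a \in A$, so $x$ together with $p \coloneqq \tfrac{1}{k^{\ge}_f(\mathcal H)}$ is feasible for \eqref{eq:lp:pr}; hence $p_R \ge \tfrac{1}{k^{\ge}_f(\mathcal H)}$. Combining the two inequalities proves $p_R = \tfrac{1}{k^{\ge}_f(\mathcal H)}$.

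I do not expect a genuine obstacle here: unlike in Lemma~\ref{lemma:pu-frac-hypergraph-part}, there is no degenerate case to split off, since $k^{\ge}_f(\mathcal H)$ is always finite and $p_R$ always positive. The only points requiring care are precisely these two finiteness/positivity facts — they are exactly what legitimizes the two divisions — together with the observation that normalizing $w^*$ produces an honest probability distribution, which holds because $\sum_{m \in M} w^*_m$ equals the optimal value $k^{\ge}_f(\mathcal H)$ at an optimal solution. Note that, in contrast to the partition case, the standing assumption $\emptyset \in M$ is not needed for this direction.
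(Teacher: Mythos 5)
Your proposal follows exactly the route the paper takes: the paper's ``proof'' of Lemma~\ref{lemma:pr-frac-hypergraph-cov} consists of noting that $p_R>0$ and that $k^{\ge}_f(\mathcal H)$ is finite (since every $a\in A$ lies in some $m\in M$) and then declaring the rest analogous to the scaling argument of Lemma~\ref{lemma:pu-frac-hypergraph-part}, which is precisely the two-directional rescaling you spell out. One small slip in your write-up: putting all mass on a single $m$ containing a \emph{fixed} element does not show $p_R>0$, because the constraints of~\eqref{eq:lp:pr} require \emph{every} element of $A$ to be covered with probability at least $p$, and the other elements may receive probability~$0$ under that distribution. The correct (and equally short) justification is to spread the mass over a family of hyperedges covering all of~$A$ --- for instance, pick for each $a\in A$ one $m\ni a$ and take the uniform distribution over these at most $|A|$ sets, giving $p_R\ge 1/|A|>0$ (or take the uniform distribution over all of $M\setminus\set{\emptyset}$). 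With that one-line repair, your argument is complete and coincides with the paper's.
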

Concluding, finding $p_U$ and $p_R$ for a set system $(A,M)$ is nothing else but finding the fractional partitioning number~$k^{=}_f(\mathcal H)$ and the fractional covering number~$k^{\ge}_f(\mathcal H)$ corresponding to the hypergraph $\mathcal H = (A, M)$.

\subsection{Computational Complexity} \label{sec:complexity}

Due to the above discussion, a fair probability distribution with respect to~$p_U$
and~$p_R$ can be found by solving the linear programs~\eqref{eq:lp:pu}
and~\eqref{eq:lp:pr}, respectively.
Linear programs are solved routinely in practice by the simplex
algorithm~\cite{dantzig1951maximization}, and algorithms such as the ellipsoid
method~\cite{khachiyan1979polynomial} and interior point
methods~\cite{karmarkar1984new} have a provable polynomial running time.
Note, however, that this does not imply that the LPs~\eqref{eq:lp:pu}
and~\eqref{eq:lp:pr} can be solved in~$\bigO(\text{poly}(|A|))$ time,
as the running time of the ellipsoid and interior point methods depend
polynomially on the number of variables~$|M|$, which can depend
exponentially on~$|A|$.

This observation is only an obstacle at first glance though.
By using the concept of LP duality, we can observe that~$p_U$ and~$p_R$
can be found in polynomial time whenever the corresponding optimization
problem over~$M$ can be solved in polynomial time.
The argument is similar to the analysis of Garc\'ia-Soriano and Bonchi~\cite{GarciaSoriano2021}, who consider a lexicographic fairness measure.
\begin{theorem} \label{thm:complexity}
  Let~$A$ be a finite set and let~$M$ be a collection of subsets of~$A$
  such that~$\emptyset \in M$.
  If, for every~$c \in \rationals^A$, a set~$m \in M$ that
  minimizes~$\sum_{a \in m} c_a$ can be found in time polynomial in~$|A|$,
  then~$p_U$ and~$p_R$ can be found in~$\bigO(\text{poly}(|A|))$ time.
\end{theorem}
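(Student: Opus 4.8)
The plan is to use LP duality together with the ellipsoid method applied to the dual programs, exploiting the fact that the separation problem for the dual is exactly (a weighted version of) the optimization problem over $M$. I will treat $p_U$ via the fractional partitioning LP~\eqref{eq:lp:kfh} and $p_R$ via the analogous fractional covering LP; by Lemmas~\ref{lemma:pu-frac-hypergraph-part} and~\ref{lemma:pr-frac-hypergraph-cov} it suffices to compute $k^{=}_f(\mathcal H)$ and $k^{\ge}_f(\mathcal H)$ (and corresponding optimal weight vectors $w$) in time polynomial in $|A|$, and then convert these back to the fairness measures and the distributions $x$ by the rescaling already used in those proofs.

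First I would write down the LP dual of~\eqref{eq:lp:kfh}. Its variables are $y \in \reals^A$ (one per vertex, unrestricted in sign since the primal constraints are equalities), the objective is to maximize $\sum_{a \in A} y_a$, and the constraints are $\sum_{a \in m} y_a \le 1$ for every $m \in M$ — that is, one constraint per hyperedge. There are only $|A|$ variables, but exponentially many constraints; however, the separation problem for a candidate $y$ asks whether $\max_{m \in M} \sum_{a \in m} y_a \le 1$, which is precisely the optimization problem over $M$ with cost vector $c = -y$ (minimizing $\sum_{a \in m} c_a$ is the same as maximizing $\sum_{a \in m} y_a$). By hypothesis this can be solved in time polynomial in $|A|$ for rational inputs. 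The ellipsoid method with this separation oracle then solves the dual in time polynomial in $|A|$ and the encoding length of the input; since the coefficients here are $0/1$ (the constraint matrix is the vertex–hyperedge incidence matrix) and the objective coefficients are all $1$, the encoding length is controlled by $|A|$, so the optimal dual value — equal by strong LP duality to $k^{=}_f(\mathcal H)$ — is obtained in $\bigO(\mathrm{poly}(|A|))$ time. To also recover a primal optimal $w$ (needed for the distribution $x$, not just the value $p_U$), I would use the standard fact that the ellipsoid run exposes polynomially many ''active'' hyperedges, and solving the primal LP restricted to that polynomially sized subfamily yields an optimal $w$; alternatively one appeals to the well-known result that a polynomial separation oracle yields polynomial-time solvability of the optimization LP including an optimal basic solution. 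If the dual is unbounded (certifying primal infeasibility, i.e.\ $k^{=}_f(\mathcal H) = \infty$), we output $p_U = 0$ and the trivial distribution $x_\emptyset = 1$.

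For $p_R$, the argument is the same with the fractional covering LP: its dual has variables $y \in \reals_{\ge 0}^A$, maximizes $\sum_{a\in A} y_a$ subject to $\sum_{a \in m} y_a \le 1$ for all $m \in M$, and the separation oracle is again the optimization problem over $M$ with cost $c = -y$; here feasibility and boundedness are guaranteed (a cover always exists), so no special case arises. Finally I translate: $p_U = 1/k^{=}_f(\mathcal H)$ (or $0$), $p_R = 1/k^{\ge}_f(\mathcal H)$, and $x_m = w_m / k^{=}_f(\mathcal H)$ respectively $x_m = w_m / k^{\ge}_f(\mathcal H)$ as in the proof of Lemma~\ref{lemma:pu-frac-hypergraph-part}; all of these are polynomial-time post-processing steps. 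The main obstacle — and the only point requiring care — is the bookkeeping for the ellipsoid method: one must check that the running time depends polynomially on $|A|$ alone, which hinges on the constraint matrix being a $0/1$ incidence matrix and on a standard bound on the bit-complexity of vertices of the resulting polyhedron, and one must invoke the (standard but nontrivial) equivalence between a polynomial separation oracle and polynomial-time optimization, including extraction of an optimal primal solution, rather than merely the optimal value.
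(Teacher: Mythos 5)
Your proposal is correct, and its engine is the same as the paper's: pass to a dual with only polynomially many variables, observe that separating its exponentially many constraints is exactly the given optimization problem over~$M$ with a suitable cost vector, and invoke the Gr\"otschel--Lov\'asz--Schrijver equivalence of separation and optimization. The difference is which primal you dualize. The paper dualizes the fairness LPs \eqref{eq:lp:pu} and \eqref{eq:lp:pr} directly, obtaining the dual \eqref{eq:lp:pu:dual} with variables $(\alpha,\beta)$; since $\emptyset \in M$ makes the primal feasible and obviously bounded, strong duality applies with no exceptional cases, and the separation step is literally ``minimize $\sum_{a\in m}\alpha^*_a$ over $M$.'' You instead route through Lemmas~\ref{lemma:pu-frac-hypergraph-part} and~\ref{lemma:pr-frac-hypergraph-cov} and dualize the fractional partitioning/covering LPs such as \eqref{eq:lp:kfh}; this is equivalent (the LPs differ only by the rescaling used in those lemmas), but it obliges you to treat the case $k^{=}_f(\mathcal H)=\infty$ via unboundedness detection for the dual, and your $p_R$ argument leans on the paper's standing assumption that every $a\in A$ lies in some $m\in M$ (not restated in the theorem). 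Your bookkeeping points---the $0/1$ incidence structure giving a well-described polyhedron, and recovering an optimal primal $w$ from the constraints generated by the ellipsoid run---are correct and in fact slightly more explicit than the paper, which only asserts the recovery of the distributions via column generation in a remark after the theorem.
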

\begin{proof}
  We only provide the arguments for~$p_U$ as the argumentation for~$p_R$ is
  analogous.

  The dual of~\eqref{eq:lp:pu} is given by
  \begin{subequations} \label{eq:lp:pu:dual}
  \begin{alignat}{7}
      &&\text{minimize}\quad && \beta &&& \\
      &&\text{subject to}\quad && \sum_{a \in m} \alpha_a + \beta &\ge 0
      &&\qquad \forall {m \in M}, \label{eq:ip:pdual:m} \\
      &&&& \sum_{a \in A}\alpha_a &= -1, &&\\
      &&&& \alpha_a &\in \reals &&\qquad\forall {a \in A},\\
      &&&& \beta &\in \reals. &&
  \end{alignat}
  \end{subequations}
  Note that variables~$\alpha$ and~$\beta$ correspond to the Constraints~\eqref{cons:pu:uniform} and~\eqref{cons:pu:probdist1}, respectively.
  The strong duality theorem of linear programming ensures that the dual
  problem has an optimal solution if and only if~\eqref{eq:lp:pu} does.
  Moreover, both the optimal primal and dual objective value coincide.
  Thus, since~$\emptyset \in M$ (by assumption), LP~\eqref{eq:lp:pu} is feasible.
  In particular, as~\eqref{eq:lp:pu} is obviously bounded, it has an
  optimal solution.
  Consequently, the dual problem has the same optimal objective value and
  we can find the value of~$p_U$ by solving the dual problem.

  Due to the seminal result by Gr\"otschel, Lov\'asz, and
  Schrijver~\cite{grotschel1981ellipsoid}, an optimal solution of the dual can be found in
  polynomial time provided the separation problem for its constraints can be
  solved in polynomial time.
  The separation problem receives as input a vector~$(\alpha^*, \beta^*) \in
  \rationals^A \times \rationals$
  and asks whether there exists a violated dual constraint.
  For the single equality constraint, this question can certainly be
  answered in~$\bigO(\text{poly}(|A|))$ time.
  For the inequality constraints, it is sufficient to find an inequality
  whose left-hand side evaluated in~$(\alpha^*, \beta^*)$ is minimal.
  This problem reduces to finding a set~$m \in M$ such that~$\sum_{a \in m}
  \alpha^*_a$ is minimized.
  By assumption, this problem can be solved in polynomial time, which
  concludes the proof.
\end{proof}

Examples where minimizing~$\sum_{a \in m} c_a$ can be done in polynomial time are matching, shortest paths, spanning trees, and many other combinatorial problems.
By Theorem~\ref{thm:complexity} and the result of Gr\"otschel et al.~\cite{grotschel1981ellipsoid},
it is furthermore possible to construct the probability distributions
corresponding to~$p_U$ and~$p_R$ in polynomial time, using column generation.

\section{Individual fairness} \label{sec:if}
In this section, we further explore the framework for individual fairness.
We link uniform fairness and Rawlsian justice for independence systems.
We additionally discuss implications and novel results for specific graph-theoretical optimization problems.

\subsection{Independence Systems} \label{sec:independence-systems}
The framework that determines $p_U$ and $p_R$ presented in Section~\ref{sec:model} can be applied to
arbitrary set systems~$(A, M)$.
While in general there is no relation between the fairness
quantities~$p_U$ and~$p_R$ other than that~$p_U \le p_R$, we prove here that if~$(A, M)$ forms a so-called
independence system, then~$p_U = p_R$ holds.

Consider a set system~$(A, M)$.
The set system~$M$ is an \emph{independence system} when~$\emptyset \in
M$ and for every~$m \in M$, we have~$m' \in M$ for all~$m' \subseteq m$.
When the set system is an independence system, we call a subset~$m \subseteq A$ \emph{independent} if~$m \in M$.
Independence systems are also known as \emph{simplicial complexes} or \emph{downward-closed set systems}.
Many independence systems are known; we mention here: edge sets that form matchings, edge sets that form forests, edge sets that are the complement of edge covers, vertex sets that are independent sets, vertex sets that are the complement of a vertex cover, and matroids.

In Section~\ref{sec:independence-systems} we show that $p_U=p_R$ for independence systems, and in Sections~\ref{subsec:fair-matching:edges} and \ref{subsec:independent-set} we focus on two special cases: fair matching for edges and fair independent sets.
For these problems, we derive lower and upper bounds on~$p_U$ and~$p_R$.

To show that~$p_U = p_R$ for independence systems, we first prove an
auxiliary lemma in which we use the dual program of~\eqref{eq:lp:pu}, given in~\eqref{eq:lp:pu:dual}.
We show the following lemma, stating that in an optimal solution for
the dual program there is some independent set for which Constraint~\eqref{eq:ip:pdual:m} is binding.
\begin{lemma}
    \label{lem:fixbeta}
    Let~$(A, M)$ be an independence system with~$|A| \ge 2$, and
    let~$(\alpha, \beta)$ be an optimal solution for~\eqref{eq:lp:pu:dual}.
    For each~$\bar a \in A$, there is an independent set~$\bar m \in
    M_{\bar a}$ for which $\sum_{a \in \bar m} \alpha_a + \beta = 0$.
\end{lemma}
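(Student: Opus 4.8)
The plan is to argue by contradiction: suppose that for some $\bar a \in A$, every independent set $\bar m \in M_{\bar a}$ satisfies the strict inequality $\sum_{a \in \bar m} \alpha_a + \beta > 0$. Since $M_{\bar a}$ is finite, there is a positive slack $\delta > 0$ such that $\sum_{a \in \bar m} \alpha_a + \beta \ge \delta$ for all $\bar m \in M_{\bar a}$. The idea is then to perturb the dual solution $(\alpha, \beta)$ to obtain a feasible solution with strictly smaller objective value $\beta$, contradicting optimality. The natural perturbation is to \emph{decrease} $\alpha_{\bar a}$ by a small amount $\theta > 0$ and simultaneously \emph{increase} some other coordinate $\alpha_{a'}$ by $\theta$ (needed to preserve the equality constraint $\sum_{a \in A} \alpha_a = -1$), and to decrease $\beta$ by a controlled amount. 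Here is where the hypothesis $|A| \ge 2$ enters: we need a second element $a' \ne \bar a$ to absorb the compensating change.

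First I would pick $a' \in A \setminus \{\bar a\}$ arbitrarily and set $\alpha'_{\bar a} = \alpha_{\bar a} - \theta$, $\alpha'_{a'} = \alpha_{a'} + \theta$, and $\alpha'_a = \alpha_a$ otherwise, so that $\sum_{a} \alpha'_a = -1$ is maintained. For a set $m \in M$, the left-hand side of \eqref{eq:ip:pdual:m} changes by $-\theta$ if $\bar a \in m$ and $a' \notin m$, by $+\theta$ if $a' \in m$ and $\bar a \notin m$, and by $0$ if $m$ contains both or neither. So the only constraints that could be harmed are those for sets $m$ with $\bar a \in m$, $a' \notin m$; and since such an $m$ lies in $M_{\bar a}$, its original slack is at least $\delta$, so choosing $\theta \le \delta$ keeps all these constraints satisfied with $\beta$ unchanged. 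The constraints for sets containing $a'$ but not $\bar a$ only improve. Thus $(\alpha', \beta)$ is still feasible — but this has not yet decreased the objective.

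To actually decrease $\beta$, I would additionally lower $\beta$ to $\beta' = \beta - \eta$ for some $\eta > 0$. This worsens \emph{every} constraint \eqref{eq:ip:pdual:m} by $\eta$. The sets $m$ that now have the least slack are exactly those whose slack was not already boosted: namely sets $m$ with $a' \in m$ (slack increased by $\theta$, so they tolerate $\eta \le \theta$) and — the delicate ones — sets $m$ with $\bar a \notin m$ and $a' \notin m$, whose slack was unchanged by the $\alpha$-perturbation. Here I would invoke the downward-closure of the independence system: if $m \in M$ with $a' \notin m$, then $m \cup \{a'\}$ need not be in $M$, so this does not immediately work. Instead, the correct move is to use downward-closure the other way. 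I expect the main obstacle to be exactly this: controlling the slack on sets containing neither $\bar a$ nor $a'$. The resolution I would pursue is to \emph{not} choose $a'$ arbitrarily but to choose it as an element minimizing $\alpha_{a'}$, or better, to split the compensation as $\alpha'_a = \alpha_a + \theta/(|A|-1)$ over \emph{all} $a \ne \bar a$; then for any $m$ not containing $\bar a$, the left-hand side changes by $+|m|\cdot\theta/(|A|-1) \ge 0$, so those constraints only improve, and the only constraints with unimproved slack are those in $M_{\bar a}$ — which have slack $\ge \delta$. Now decreasing $\beta$ by $\eta$ with $0 < \eta \le \delta - \theta$ (and $\theta < \delta$) keeps everything feasible while strictly lowering the objective, contradicting optimality of $(\alpha,\beta)$. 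Hence the assumed $\bar a$ cannot exist, and since $\emptyset \in M_{\bar a}$ is impossible (as $\bar a \notin \emptyset$) we indeed get a \emph{nonempty} witnessing set $\bar m \ni \bar a$, completing the proof.
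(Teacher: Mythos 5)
Your overall strategy is exactly the paper's: decrease $\alpha_{\bar a}$ by $\theta$, spread the compensation $\theta/(|A|-1)$ over all other coordinates, and lower $\beta$, contradicting optimality. However, your final feasibility check has a genuine gap in the choice of $\eta$. After your perturbation, the left-hand side of constraint~\eqref{eq:ip:pdual:m} for a \emph{nonempty} set $m$ with $\bar a \notin m$ changes by $|m|\theta/(|A|-1) - \eta$, and such constraints can be tight at $(\alpha,\beta)$ --- in fact, under your contradiction hypothesis they must be: by complementary slackness every set in the support of an optimal primal solution of~\eqref{eq:lp:pu} yields a tight dual constraint, the hypothesis forces this support to avoid $M_{\bar a}$, and since $p_U \ge 1/|A| > 0$ for an independence system (singletons are independent) the support contains a nonempty set avoiding $\bar a$. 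Your condition $0 < \eta \le \delta - \theta$ does not prevent $\eta > \theta/(|A|-1)$ (e.g.\ take $\theta$ much smaller than $\delta$), in which case that tight constraint is violated, so ``keeps everything feasible'' fails as stated. You also never verify the constraint for $m = \emptyset$, which after the change reads $\beta - \eta \ge 0$ and requires $\eta \le \beta$.

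The repair is small and is precisely what the paper does: couple the two parameters by taking $\eta = \theta/(|A|-1)$. Then every nonempty $m$ with $\bar a \notin m$ changes by $(|m|-1)\theta/(|A|-1) \ge 0$, the constraints for $m \in M_{\bar a}$ are worsened by at most $\theta + \theta/(|A|-1) = \tfrac{|A|}{|A|-1}\theta$, which is absorbed by the slack once $\theta \le \tfrac{|A|-1}{|A|}\delta$, and the empty-set constraint is handled by additionally requiring $\theta \le \beta(|A|-1)$, which is possible because $\beta = p_U > 0$ here. With this coupling your argument becomes the paper's proof verbatim; without it, the perturbed point is in general (indeed, necessarily) infeasible.
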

\begin{proof}
    Suppose that $\sum_{a \in m} \alpha_a + \beta > 0$ for all~$m \in M_{\bar a}$, for the sake of contradiction.
    Then, there exists an~$\epsilon > 0$ such that
    \[
        \sum_{a \in m} \alpha_a + \beta - \frac{|A|}{|A|-1} \epsilon \ge 0
    \]
    for all~$m \in M_{\bar a}$.
    In particular, we may assume that~$\epsilon \le \beta(|A| - 1)$.
    Define
    \[
        \alpha'_a = \begin{cases}
                        \alpha_a - \epsilon, & \text{if $a = \bar a$,} \\
                        \alpha_a + \frac{\epsilon}{|A| - 1}, & \text{
                            otherwise,}
        \end{cases}
    \]
    i.e., we distribute an $\epsilon$-part of the weight of~$\bar a$
    over all other elements.
    Also, define~$\beta' = \beta - \frac{\epsilon}{|A|-1}$.
    To complete the proof, we will show that~$(\alpha', \beta')$ is a solution
    for~\eqref{eq:lp:pu:dual}.
    Since~$|A| \ge 2$, we have that~$\beta' < \beta$, which contradicts
    the optimality of~$(\alpha, \beta)$.

    First notice that $\sum_{a \in A} \alpha'_{a} = \sum_{a \in A} \alpha_{a} = 1$.
    To show that the solution satisfies the
    Constraints~\eqref{eq:ip:pdual:m}, we consider three cases for~$m$:
    \begin{itemize}
        \item For~$m = \emptyset$, we have, using the upper bound~$\epsilon \le
        \beta(|A| - 1)$,
        \[
            \sum_{a \in m} \alpha'_a + \beta' = \beta - \frac{\epsilon}{|A| - 1} \ge 0.
        \]

        \item For~$m \in M_{\bar a}$, using that $|m| \ge 1$, we have
        \[
            \begin{split}
                \sum_{a \in m} \alpha'_a + \beta'
                &= \sum_{a \in m} \alpha_a + \frac{|m| - 1}{|A| - 1}\epsilon
                - \epsilon + \beta - \frac{\epsilon}{|A| - 1} \\
                &\ge \sum_{a \in m} \alpha_a + \beta
                - \frac{|A|}{|A| - 1}\epsilon
                \ge 0.
            \end{split}
        \]

        \item For~$m \in M\setminus (M_{\bar a} \cup \set{\emptyset})$,
        using that $|m| \ge 1$, we have
        \[
            \begin{split}
                \sum_{a \in m} \alpha'_a + \beta' &= \sum_{a \in m} \alpha_a
                + \beta + \frac{|m| - 1}{|A| - 1} \epsilon \\
                &\ge \sum_{a \in m} \alpha_a + \beta \ge 0.
                \qedhere
            \end{split}
        \]
    \end{itemize}
\end{proof}
We can now show that the two fairness quantities~$p_U$ and~$p_R$ coincide for independence systems.
\begin{theorem} \label{thm:indepsys:pu-pr}
    For any independence system~$(A, M)$, $p_U = p_R$.
\end{theorem}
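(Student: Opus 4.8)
The plan is to prove $p_R \le p_U$ via LP duality, since the reverse inequality $p_U \le p_R$ is already noted (every feasible solution of \eqref{eq:lp:pu} is also feasible for \eqref{eq:lp:pr} with the same objective). The cases $|A| \le 1$ are degenerate and can be checked by hand — if $|A| = 1$ then $M = \{\emptyset, A\}$ is forced and $p_U = p_R = 1$ — so I would assume $|A| \ge 2$, which is exactly what is needed to apply Lemma~\ref{lem:fixbeta}.

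First I would write down the dual of \eqref{eq:lp:pr}. Because the coverage constraints there are inequalities $\sum_{m \in M_a} x_m \ge p$ rather than the equalities of \eqref{eq:lp:pu}, its dual is precisely the dual \eqref{eq:lp:pu:dual} of \eqref{eq:lp:pu} \emph{together with the extra sign constraints $\alpha_a \le 0$ for all $a \in A$}; the objective $\min \beta$ and the remaining constraints are unchanged. Both \eqref{eq:lp:pu} and \eqref{eq:lp:pr} are feasible (take $x_\emptyset = 1$) and bounded ($p \le \sum_m x_m = 1$), so strong duality gives that $p_U$ equals the optimum of \eqref{eq:lp:pu:dual} and $p_R$ equals the optimum of this augmented program. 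Hence it suffices to exhibit a feasible point of the augmented program with objective value $p_U$, and the natural candidate is any optimal solution of \eqref{eq:lp:pu:dual}.

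The heart of the argument is therefore the claim that every optimal solution $(\alpha, \beta)$ of \eqref{eq:lp:pu:dual} automatically satisfies $\alpha_a \le 0$ for all $a \in A$. To see this, fix $\bar a \in A$; Lemma~\ref{lem:fixbeta} supplies an independent set $\bar m \in M_{\bar a}$ with $\sum_{a \in \bar m} \alpha_a + \beta = 0$. Since $(A, M)$ is an independence system, $\bar m \setminus \{\bar a\} \in M$ as well, so Constraint~\eqref{eq:ip:pdual:m} applied to this smaller set reads $\sum_{a \in \bar m \setminus \{\bar a\}} \alpha_a + \beta \ge 0$; subtracting the equality for $\bar m$ yields $-\alpha_{\bar a} \ge 0$. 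With the claim in hand, such an $(\alpha, \beta)$ is feasible for the dual of \eqref{eq:lp:pr}, so $p_R \le \beta = p_U$, and combined with $p_U \le p_R$ we conclude $p_U = p_R$.

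I expect the only genuine obstacle to be Lemma~\ref{lem:fixbeta} itself — the fact that a tight dual constraint can be located at a set containing the prescribed element — but that has already been established, so what remains here is bookkeeping: correctly transcribing the dual of \eqref{eq:lp:pr} and disposing of the small-$|A|$ cases. An alternative, purely primal route would start from an optimal $x$ for \eqref{eq:lp:pr} and use downward-closedness of $M$ to redistribute the surplus coverage $\sum_{m \in M_a} x_m - p_R$ onto subsets, turning the inequalities into equalities without decreasing $p$; this works too but is more computational than the dual argument above.
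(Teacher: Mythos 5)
Your proposal is correct and follows essentially the same route as the paper's proof: it identifies the dual of~\eqref{eq:lp:pr} as~\eqref{eq:lp:pu:dual} augmented with the sign constraints $\alpha_a \le 0$, and then uses Lemma~\ref{lem:fixbeta} together with downward-closedness of~$M$ to show that an optimal solution of~\eqref{eq:lp:pu:dual} automatically satisfies these constraints. The only cosmetic difference is that you argue directly for each $\bar a$ while the paper phrases the same step as a proof by contradiction.
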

\begin{proof}
    Observe that $p_U = p_R$ trivially holds if~$|A| = 1$.
    If~$|A| \ge 2$, we consider the dual of~\eqref{eq:lp:pu},
    given in~\eqref{eq:lp:pu:dual}.
    The dual of~\eqref{eq:lp:pr} is similar to~\eqref{eq:lp:pu:dual}, albeit with the additional
    constraints~$\alpha_a \le 0$ for all~$a \in A$.
    To prove the statement, it suffices to show that any optimal
    solution of~\eqref{eq:lp:pu:dual} implicitly satisfies these
    additional constraints.

    For the sake of contradiction, assume that there exists an optimal
    solution~$(\alpha_a, \beta)$ of~\eqref{eq:lp:pu:dual}
    where~$\alpha_{\bar a} > 0$, for some element~$\bar a$.
    By Lemma~\ref{lem:fixbeta}, there exists an independent
    set~$\bar m \in M_{\bar a}$ such
    that~$\beta = -\sum_{a \in \bar m} \alpha_a$.
    However, consider now the independent
    set~$m = \bar m \setminus \set{\bar a}$.
    Note that indeed~$m \in M$ as~$(A, M)$ is an independence system.
    We have
    \[
        \sum_{a \in m} \alpha_a + \beta
        = \sum_{a \in \bar m} \alpha_a - \alpha_{\bar a} + \beta
        = -\alpha_{\bar a} < 0,
    \]
    which violates Constraint~\eqref{eq:ip:pdual:m};
    a contradiction.
\end{proof}

\subsubsection{Individually-Fair Matching for Edges} \label{subsec:fair-matching:edges}
Consider fair matching for edges.
Thus, we consider the set system~$(E, M)$, where~$E$ is the edge set of the graph and~$M$ the set of all matchings.
We assume~$|E| \ge 1$.
We furthermore introduce the notation~$M_e \coloneqq \set{m \in M : e \in m}$ as the set of all matchings that contain the edge~$e \in E$.

Notice that~$(E, M)$ is indeed a independence system, by observing that for any matching~$m$ and edge~$e \in m$, $m \setminus \set{e}$ is also a matching.
By Theorem~\ref{thm:indepsys:pu-pr}, we thus have that~$p_U = p_R$ for fair matching for edges.

To further investigate these two fairness measures in this settings, we turn to the fractional covering number of the hypergraph~$(E, M)$.
The fractional covering number is equal to the \emph{fractional edge coloring number}~$\ecolnumf G$, which can be defined as
\begin{subequations}
    \label{eq:ip:fracedgecol}
    \begin{alignat}{7}
        \ecolnumf G =\quad &&\text{minimize}\quad && \sum_{m \in M} w_m
        &&& \\
        &&\text{subject to}\quad && \sum_{m \in M_e} w_m &\ge 1
        &&\qquad \forall {e \in E}, \\
        &&&& w_m &\ge 0 &&\qquad\forall {m \in M}.
    \end{alignat}
\end{subequations}
By Lemma~\ref{lemma:pr-frac-hypergraph-cov}, we thus have the following full characterization of uniform fairness and Rawlsian justice for fair matchings for edges.
\begin{corollary}
    For any graph~$G$, $p_U = p_R = \frac{1}{\ecolnumf G}$.
\end{corollary}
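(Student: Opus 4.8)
The plan is to chain together the two results that have already been established in the excerpt. By Theorem~\ref{thm:indepsys:pu-pr}, applied to the independence system $(E, M)$ of matchings (whose downward-closedness was just verified), we immediately get $p_U = p_R$. It therefore suffices to show $p_R = \frac{1}{\ecolnumf G}$. For this I would invoke Lemma~\ref{lemma:pr-frac-hypergraph-cov} with the hypergraph $\mathcal H = (E, M)$: it gives $p_R = \frac{1}{k^{\ge}_f(\mathcal H)}$, where $k^{\ge}_f(\mathcal H)$ is the fractional covering number of $\mathcal H$.

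The one remaining thing to pin down is the identification $k^{\ge}_f(\mathcal H) = \ecolnumf G$. But this is immediate by inspection: the LP~\eqref{eq:lp:kfh}-style formulation of the fractional covering number for the hypergraph with vertex set $E$ and hyperedges $M$ has one nonnegative variable $w_m$ per matching $m \in M$, the covering constraint $\sum_{m \in M_e} w_m \ge 1$ for each edge $e \in E$ (since the hyperedges incident to the hypergraph-vertex $e$ are exactly the matchings containing $e$, i.e.\ the sets in $M_e$), and objective $\min \sum_{m \in M} w_m$. This is verbatim the LP~\eqref{eq:ip:fracedgecol} defining $\ecolnumf G$, so the two quantities are literally the same optimization problem. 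Hence $k^{\ge}_f(\mathcal H) = \ecolnumf G$, which is finite and positive since the assumption that every edge lies in some matching (trivially true: $\{e\} \in M$) guarantees a cover exists.

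Putting the pieces together: $p_U = p_R = \frac{1}{k^{\ge}_f(\mathcal H)} = \frac{1}{\ecolnumf G}$, which is the claim. I do not anticipate any real obstacle here — the corollary is genuinely a corollary, a direct combination of Theorem~\ref{thm:indepsys:pu-pr}, Lemma~\ref{lemma:pr-frac-hypergraph-cov}, and the syntactic observation that the fractional covering number of $(E,M)$ coincides with the fractional edge coloring number. If anything needs a word of care, it is only confirming that the hypothesis ``$|E| \ge 1$'' (so $|A| \ge 1$, and in fact the independence-system machinery in Theorem~\ref{thm:indepsys:pu-pr} already handles both $|A| = 1$ and $|A| \ge 2$) is in force, and noting in passing that a fractional cover always exists so $\ecolnumf G < \infty$ and the reciprocal is well-defined.
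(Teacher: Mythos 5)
Your argument is correct and is essentially the paper's own proof: the paper likewise combines Theorem~\ref{thm:indepsys:pu-pr} (since $(E,M)$ is an independence system) with Lemma~\ref{lemma:pr-frac-hypergraph-cov}, after noting that the fractional covering number of the hypergraph $(E,M)$ is, by definition of its LP, exactly $\ecolnumf G$. Your extra remarks on finiteness and the $|E|\ge 1$ hypothesis are fine but not needed beyond what the paper already assumes.
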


The proof follows directly from the results in Theorem~\ref{thm:indepsys:pu-pr} and Lemma~\ref{lemma:pr-frac-hypergraph-cov}, using that~$(E, M)$ is an independence system.

Using previous results from (fractional) edge coloring theory, we can derive bounds for~$p_U$ and~$p_R$ that are easy to compute.
A famous result is a theorem of Vizing~\cite{vizing1964estimate} that states the integral edge coloring number~$\ecolnum G$ can only take one of two possible values.
\begin{theorem}[Vizing~\cite{vizing1964estimate}]
    Let~$G$ be a graph.
    Then it holds that~$\maxdeg G \le \ecolnum G \le \maxdeg G + 1$.
\end{theorem}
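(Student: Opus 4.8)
The plan is to prove the two inequalities separately. The inequality $\maxdeg G \le \ecolnum G$ is immediate: fix a vertex $v$ of degree $\maxdeg G$; in any proper edge colouring the $\maxdeg G$ edges incident to $v$ pairwise meet at $v$, hence receive pairwise distinct colours, so every proper edge colouring of $G$ uses at least $\maxdeg G$ colours (and if $G$ has no edge both sides equal $0$).

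For $\ecolnum G \le \maxdeg G + 1$ I would run the classical argument of Vizing. Write $\Delta = \maxdeg G$ and induct on $|E|$, the case $|E| = 0$ being trivial. For the inductive step, delete one edge $uv_0$; by the inductive hypothesis $G - uv_0$ has a proper edge colouring with colour set $\{1, \dots, \Delta+1\}$, and it suffices to modify it so that $uv_0$ also receives a colour. Since every vertex has degree at most $\Delta$, at each vertex $w$ at least one colour is \emph{missing} (used by no edge at $w$). Starting from $v_0$, greedily build a maximal \emph{fan} at $u$: a sequence $v_0, v_1, \dots, v_k$ of distinct neighbours of $u$ such that $uv_0$ is uncoloured and, for $1 \le i \le k$, the colour of $uv_i$ is missing at $v_{i-1}$. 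Two recolouring moves are available: \emph{shifting the fan} to an index $i$, i.e.\ recolouring $uv_j$ with the old colour of $uv_{j+1}$ for $j < i$ and leaving $uv_i$ uncoloured (this yields a proper colouring of $G - uv_i$ and only enlarges the sets of colours missing at $u$ and at $v_i$); and swapping two colours along a maximal alternating path in those colours (a Kempe chain). If some colour $\alpha$ missing at $u$ is also missing at some fan vertex $v_i$, then shifting the fan to $i$ and colouring $uv_i$ with $\alpha$ finishes the step. Otherwise, fix $\alpha$ missing at $u$ and $\beta$ missing at $v_k$; one shows $\alpha \ne \beta$, that $\beta$ occurs at $u$ on a fan edge $uv_j$ with $1 \le j \le k-1$ (otherwise the fan could be extended, contradicting maximality), and hence that $\beta$ is missing at $v_{j-1}$ as well. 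Consider the subgraph formed by the edges coloured $\alpha$ or $\beta$, a disjoint union of paths and cycles; each of $u$, $v_{j-1}$, $v_k$ is an endpoint of one of its paths, so the $\alpha\beta$-path starting at $u$ avoids at least one of $v_{j-1}, v_k$, call it $v_t$ (choosing $v_t = v_{j-1}$ whenever both are avoided). Swapping $\alpha$ and $\beta$ along the $\alpha\beta$-path starting at $v_t$, which then avoids $u$, makes $\alpha$ missing at both $u$ and $v_t$; one checks the fan $v_0, \dots, v_t$ still satisfies its defining property, and then shifting that fan to $t$ and colouring $uv_t$ with $\alpha$ completes the step.

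The routine parts are the verifications that a fan shift, colouring the freed edge, and a Kempe swap each preserve properness; I would dispatch these by inspecting the affected vertices one at a time — $u$, the $v_i$, and the internal vertices of the swapped path — using that along a fan edge $uv_i$ with $i \ne j$ the colour lies outside $\{\alpha,\beta\}$. The main obstacle is the coordination in the non-trivial case: the Kempe swap must be carried out along an alternating path that neither touches $u$ nor destroys the fan one still needs, which forces the careful choice between the endpoint $v_{j-1}$ (with the shortened fan $v_0,\dots,v_{j-1}$, whose edges are untouched by an $\alpha\beta$-swap) and $v_k$ (with the full fan), together with a short case analysis of where the $\alpha\beta$-path from $u$ terminates; once this choice is made consistently, each branch collapses to a fan shift as in the easy case. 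In the paper itself this is a classical result that we simply cite as Vizing~\cite{vizing1964estimate}.
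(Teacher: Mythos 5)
The paper offers no proof of this statement at all: it is quoted as a classical result and attributed to Vizing~\cite{vizing1964estimate}, so there is no in-paper argument to compare against. Your outline is the standard Vizing fan-recoloring proof, and it is sound. The lower bound is correct as stated, and in the inductive step you identify the genuinely delicate point correctly: since $\alpha$ is absent at $u$ and $\beta$ is absent at $v_{j-1}$ and $v_k$, all three vertices have degree at most one in the $\alpha\beta$-subgraph, so the component of $u$ misses at least one of $v_{j-1},v_k$; your tie-breaking rule (take $v_t=v_{j-1}$ whenever possible) is exactly what makes the subsequent Kempe swap compatible with the fan, because if you are forced to take $v_t=v_k$ then $v_{j-1}$ is the other endpoint of $u$'s component, hence lies outside the component swapped from $v_k$, so the condition ``$\beta$ missing at $v_{j-1}$'' survives and the full fan shift remains proper. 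The remaining ``one checks'' steps (properness of a fan shift, of coloring the freed edge, and of a Kempe swap) are routine and verified vertex by vertex as you indicate, so the proposal would expand into a complete proof; it simply supplies an argument where the paper deliberately gives none.
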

As shown in~\cite{scheinerman2011fractional}, the same lower and upper bounds hold for the fractional edge coloring number, a theorem we refer to as the fractional Vizing theorem.
\begin{theorem}[Fractional Vizing~\cite{scheinerman2011fractional}]
    For any graph~$G$, it holds that $\maxdeg G \le \ecolnumf G \le \maxdeg G + 1$.
\end{theorem}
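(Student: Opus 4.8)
The plan is to prove the two inequalities separately, each by a short argument that piggybacks on facts already available in the excerpt.

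For the lower bound $\maxdeg G \le \ecolnumf G$, I would argue by a counting estimate at a vertex of maximum degree. Let $v$ be such a vertex, so $|\delta(v)| = \maxdeg G$, and let $w$ be any feasible solution of~\eqref{eq:ip:fracedgecol}. Summing the covering constraint over the edges incident to $v$ gives $\maxdeg G \le \sum_{e \in \delta(v)} \sum_{m \in M_e} w_m$. I would then swap the order of summation and invoke the defining property that a matching meets $\delta(v)$ in at most one edge: the right-hand side equals $\sum_{m \in M} w_m \cdot |m \cap \delta(v)| \le \sum_{m \in M} w_m$. Hence every feasible $w$ has objective value at least $\maxdeg G$, so $\ecolnumf G \ge \maxdeg G$.

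For the upper bound $\ecolnumf G \le \maxdeg G + 1$, I would simply exhibit a feasible solution of~\eqref{eq:ip:fracedgecol} of that weight. By Vizing's theorem there is a proper edge coloring of $G$ using at most $\maxdeg G + 1$ colors; each color class is a matching, so we obtain matchings $m_1, \dots, m_k \in M$ with $k \le \maxdeg G + 1$ whose union is $E$. Setting $w_{m_i} = 1$ for $i = 1, \dots, k$ and $w_m = 0$ otherwise yields a feasible point of~\eqref{eq:ip:fracedgecol}, since each edge lies in at least one chosen matching, with objective value $k \le \maxdeg G + 1$. Equivalently, this observes that $\ecolnumf G \le \ecolnum G$, the fractional program being the linear relaxation of the integral edge-coloring problem, after which the integral Vizing theorem finishes the argument.

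There is no serious obstacle here: the lower bound is a one-line double-counting argument and the upper bound is immediate from the already-stated integral Vizing theorem. The only points that warrant a sentence of care are the observation that $|m \cap \delta(v)| \le 1$ for every matching $m$ — which is exactly the defining property of a matching — and, in the upper bound, the fact that the matchings arising from a proper edge coloring genuinely cover every edge, so that the associated $0/1$ weights satisfy the $\ge$-constraints of~\eqref{eq:ip:fracedgecol}.
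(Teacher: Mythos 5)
Your proof is correct. The paper does not prove this statement itself but only cites it from Scheinerman and Ullman~\cite{scheinerman2011fractional}, and your argument is exactly the standard one given there: the lower bound $\maxdeg G \le \ecolnumf G$ by double counting at a vertex of maximum degree (using that every matching meets $\delta(v)$ in at most one edge), and the upper bound via $\ecolnumf G \le \ecolnum G \le \maxdeg G + 1$, i.e., taking the color classes of a Vizing edge coloring as an integral feasible solution of~\eqref{eq:ip:fracedgecol}.
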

This result directly implies that~$p_U$ and~$p_R$ are closely related to the maximum degree of the graph.
\begin{corollary}
    For any graph~$G$, we have that~$p_U$ and~$p_R$ are bounded by~$\frac{1}{\maxdeg G + 1} \le p_U = p_R \le \frac{1}{\maxdeg G}$.
\end{corollary}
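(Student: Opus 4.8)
The plan is to combine the full characterisation from the preceding corollary with the fractional Vizing theorem, so that only an elementary manipulation of reciprocals remains. First I would recall that, since $(E, M)$ is an independence system and $|E| \ge 1$, the preceding corollary gives $p_U = p_R = \frac{1}{\ecolnumf G}$; in particular $\ecolnumf G$ is a finite positive real, being the optimal value of the bounded, feasible linear program~\eqref{eq:ip:fracedgecol}.

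Next I would invoke the fractional Vizing theorem, which yields $\maxdeg G \le \ecolnumf G \le \maxdeg G + 1$. Because $|E| \ge 1$, the graph has at least one vertex of positive degree, so $\maxdeg G \ge 1$ and all three quantities $\maxdeg G$, $\ecolnumf G$, and $\maxdeg G + 1$ are strictly positive. Taking reciprocals therefore reverses the inequalities while preserving their validity, giving
\[
    \frac{1}{\maxdeg G + 1} \le \frac{1}{\ecolnumf G} \le \frac{1}{\maxdeg G}.
\]
Substituting $p_U = p_R = \frac{1}{\ecolnumf G}$ into the middle term yields exactly the claimed bound.

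There is essentially no obstacle here: the statement is a direct corollary of two results already established in the excerpt, and the only point requiring a word of care is that $\maxdeg G \ge 1$, so that inversion is legitimate and $\frac{1}{\maxdeg G}$ is well defined — which is guaranteed by the standing assumption $|E| \ge 1$ in this subsection. Consequently the ``proof'' would consist of little more than citing the preceding corollary and the fractional Vizing theorem and performing the reciprocal step above.
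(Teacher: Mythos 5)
Your proposal is correct and matches the paper's (implicit) argument exactly: the bound is obtained by combining the preceding corollary $p_U = p_R = \frac{1}{\ecolnumf G}$ with the fractional Vizing theorem and taking reciprocals. Your extra remark that $|E|\ge 1$ guarantees $\maxdeg G \ge 1$, so the inversion is legitimate, is a small but sound addition the paper leaves unstated.
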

We now turn our attention to describing graphs for which the lower and upper bound are attained.
It is shown in~\cite{scheinerman2011fractional} that the lower bound is attained if and only if~$G = K_{2n+1}$, i.e., the complete graph on an odd number of vertices~(with~$n \ge 1$ integer).
We give a sufficient condition for the upper bound to be achieved.

\begin{theorem}
    Let~$G$ be a graph.
    If~$p_U = p_R = \frac{1}{\maxdeg G}$, then it holds that~$\nu(G) \ge \frac{|E|}{\maxdeg G}$.
\end{theorem}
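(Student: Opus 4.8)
The plan is to work with the fractional edge coloring formulation and exploit the hypothesis $\ecolnumf{G} = \maxdeg{G}$, i.e., that there is a fractional edge coloring of total weight exactly $\maxdeg{G}$. Let $w$ be an optimal solution of \eqref{eq:ip:fracedgecol}, so $\sum_{m\in M} w_m = \maxdeg{G}$ and $\sum_{m\in M_e} w_m \ge 1$ for every $e \in E$. First I would observe that by complementary slackness, or simply by a direct averaging argument, at least one matching $m$ in the support of $w$ must have cardinality at least $\frac{|E|}{\maxdeg{G}}$. The key identity is a double-counting of the quantity $\sum_{m\in M} w_m \lvert m\rvert = \sum_{e\in E}\sum_{m\in M_e} w_m \ge \sum_{e\in E} 1 = |E|$, where the inequality uses the covering constraints. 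On the other hand, $\sum_{m\in M} w_m\lvert m\rvert \le \bigl(\max_{m : w_m > 0}\lvert m\rvert\bigr)\sum_{m\in M} w_m = \bigl(\max_{m : w_m > 0}\lvert m\rvert\bigr)\maxdeg{G}$. Combining the two bounds gives $\max_{m : w_m>0}\lvert m\rvert \ge \frac{|E|}{\maxdeg{G}}$, and since any matching in the support is a genuine matching of $G$, its cardinality is at most $\nu(G)$; hence $\nu(G) \ge \frac{|E|}{\maxdeg{G}}$.

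The steps in order: (i) invoke the Corollary that $p_U = p_R = \frac{1}{\ecolnumf{G}}$, so the hypothesis $p_U = p_R = \frac{1}{\maxdeg{G}}$ is equivalent to $\ecolnumf{G} = \maxdeg{G}$; (ii) take an optimal fractional edge coloring $w$ witnessing this value; (iii) perform the double-counting of $\sum_m w_m \lvert m\rvert$, bounding it below by $|E|$ via the covering constraints and above by $\nu(G)\cdot\maxdeg{G}$ since every matching used has size at most $\nu(G)$ and the weights sum to $\maxdeg{G}$; (iv) rearrange to conclude $\nu(G) \ge \frac{|E|}{\maxdeg{G}}$.

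I do not expect a serious obstacle here — the argument is a short LP/double-counting computation. The only point requiring mild care is step (iii): one must bound $\lvert m\rvert$ uniformly by $\nu(G)$ for all matchings with positive weight, rather than using the support-maximum, to get the clean inequality $\sum_m w_m\lvert m\rvert \le \nu(G)\,\maxdeg{G}$ directly. An alternative framing avoids even mentioning $w$ explicitly: since $\frac{1}{\maxdeg{G}}$ is achievable as the common value $p_U = p_R$, there is a probability distribution $x$ over matchings with $\sum_{m\in M_e} x_m = \frac{1}{\maxdeg{G}}$ for every edge $e$; then $\mathbb{E}_{m\sim x}[\lvert m\rvert] = \sum_e \Pr[e\in m] = \frac{|E|}{\maxdeg{G}}$, and since $\lvert m\rvert \le \nu(G)$ always, the expectation is at most $\nu(G)$, giving the claim. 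This probabilistic phrasing is perhaps the cleanest and is the one I would write up, as it ties directly back to the sampling interpretation of $p_U$ and $p_R$ emphasized throughout the paper.
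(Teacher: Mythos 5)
Your proposal is correct, and it takes a genuinely different route from the paper. The paper fixes $p = \frac{1}{\maxdeg G}$ in the primal LP~\eqref{eq:lp:pu} for the system $(E,M)$, writes down the dual, and exhibits the explicit dual solution $\alpha_e = -1$ for all $e \in E$, $\beta = \nu(G)$: if $\nu(G) < \frac{|E|}{\maxdeg G}$ this solution has negative objective value, so by scaling the dual is unbounded and the primal infeasible, contradicting the hypothesis. Your argument stays entirely on the primal side: from $\ecolnumf G = \maxdeg G$ (via the corollary $p_U = p_R = \frac{1}{\ecolnumf G}$) you take an optimal fractional edge coloring $w$ and double-count $\sum_{m} w_m |m|$, which is at least $|E|$ by the covering constraints and at most $\nu(G)\,\maxdeg G$ since every matching has size at most $\nu(G)$; your probabilistic rephrasing via $\mathbb{E}_{m\sim x}[|m|] = \sum_{e} \Pr[e \in m] = \frac{|E|}{\maxdeg G} \le \nu(G)$ is the same computation with $x = w/\maxdeg G$. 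The two arguments are dual faces of one another --- the paper's dual certificate $(\alpha\equiv -1,\ \beta=\nu(G))$ encodes exactly your counting inequality $|m| \le \nu(G)$ --- but yours is more elementary (no duality or unboundedness argument needed), connects directly to the sampling interpretation the paper emphasizes, and makes visible that only one of the two hypotheses is actually needed: $p_R = \frac{1}{\maxdeg G}$ alone (or $p_U$ alone, in the expectation version) already yields $\nu(G) \ge \frac{|E|}{\maxdeg G}$. The only care point, which you noted yourself, is to bound $|m|$ by $\nu(G)$ uniformly over the support rather than by the support maximum, and to observe that the optimum of the finite LP is attained so that $w$ (or the distribution $x$) indeed exists.
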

\begin{proof}
    Consider the LP~\eqref{eq:lp:pu} for the independence system~$(E, M)$, and replace the variable~$p$ with the constant~$\frac{1}{\maxdeg G}$.
    Note that this LP has a feasible solution, as~$p_U = p_R = \frac{1}{\maxdeg G}$.
    The corresponding dual is
    \begin{subequations}
    \begin{alignat}{7}
        &&\text{minimize}\quad && \frac{1}{\maxdeg G} \sum_{e \in E} \alpha_e + \beta &&& \\
        &&\text{subject to}\quad && \sum_{e \in m} \alpha_e + \beta &\ge 0
        &&\qquad \forall {m \in M}, \\
        &&&& \alpha_e &\in \reals &&\qquad\forall {e \in E},\\
        &&&& \beta &\in \reals. &&
    \end{alignat}
    \end{subequations}
    Consider now the dual solution with~$\alpha_e = -1$ for every~$e \in E$ and~$\beta = \nu(G)$.
    Note that this solution is indeed feasible, and has dual objective value~$\nu(v) - \frac{|E|}{\maxdeg G}$.
    This expression is negative when~$\nu(v) < \frac{|E|}{\maxdeg G}$.
    By scaling the solution, the dual is unbounded whenever this condition on the size of the maximum matching in the graph holds.
    This implies that the primal LP is infeasible, a contradiction.
    Hence, $\nu(v) \ge \frac{|E|}{\maxdeg G}$.
\end{proof}
In particular, this sufficient condition implies for regular graphs, where~$\frac{|E|}{\maxdeg G} = \frac{|V|}{2}$, that a perfect matching needs to exist for the~$\frac{1}{\maxdeg G}$ upper bound to be achieved for~$p_U$ and~$p_R$.

\subsubsection{Individually-Fair Independent Set} \label{subsec:independent-set}
In this section, we discuss modeling fairness for independent sets in a graph.
Let~$I$ denote the collection of all independent sets in the graph.
Then~$(V, I)$ is an independence system.
Indeed, the empty set is an independent set, and for every independent set~$S \in I$ we have that every subset~$S' \subseteq S$ is also an independent set.
The fractional covering number of the hypergraph~$(V, I)$ coincides with the~\emph{fractional vertex coloring number}~$\chi_f(G)$.
Let~$\pu{is},\pr{is}$ denote uniform fairness and Rawlsian justice for fair independent set, respectively.
We have the following consequence of Theorem~\ref{thm:indepsys:pu-pr}.
\begin{corollary}
    For any graph~$G$, $\pu{is} = \pr{is} = \frac{1}{\chi_f(G)}$.
\end{corollary}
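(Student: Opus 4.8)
The plan is to specialize the two structural results already proved. First I would record that $(V, I)$ is an independence system --- which, as noted just above, holds because the empty set is independent and every subset of an independent set is independent --- so that Theorem~\ref{thm:indepsys:pu-pr} applies and yields $\pu{is} = \pr{is}$ immediately. It thus remains only to identify the common value.

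To this end, view $(V, I)$ as a hypergraph $\mathcal H$ with vertex set $V$ and hyperedges the independent sets $I$. Since every singleton $\{v\}$ is independent, every vertex of $\mathcal H$ lies in at least one hyperedge, so the assumptions of Section~\ref{sec:model} are met and Lemma~\ref{lemma:pr-frac-hypergraph-cov} gives $\pr{is} = \frac{1}{k^{\ge}_f(\mathcal H)}$. The remaining claim is that $k^{\ge}_f(\mathcal H) = \chi_f(G)$, and I would establish this simply by comparing defining linear programs: the LP defining $k^{\ge}_f(\mathcal H)$ minimizes $\sum_{m \in I} w_m$ over nonnegative weights $w$ on the independent sets subject to the constraint that, for each vertex $v$, the total weight of the independent sets containing $v$ is at least $1$; this is exactly the standard LP-relaxation whose optimum is the fractional vertex coloring number $\chi_f(G)$. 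Combining, $\pu{is} = \pr{is} = \frac{1}{k^{\ge}_f(\mathcal H)} = \frac{1}{\chi_f(G)}$.

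I do not anticipate any real obstacle: the corollary is a direct consequence of Theorem~\ref{thm:indepsys:pu-pr} and Lemma~\ref{lemma:pr-frac-hypergraph-cov}, mirroring the corollary for fair matching for edges, with the only genuinely new input being the classical observation that a fractional cover of the independent-set hypergraph is precisely a fractional vertex coloring of $G$. The one point worth stating carefully is that these two LPs coincide verbatim, but this is immediate from the definitions.
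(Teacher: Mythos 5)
Your proposal is correct and follows exactly the paper's route: $(V,I)$ is an independence system so Theorem~\ref{thm:indepsys:pu-pr} gives $\pu{is}=\pr{is}$, and Lemma~\ref{lemma:pr-frac-hypergraph-cov} together with the observation that the fractional covering LP of the hypergraph $(V,I)$ is verbatim the LP defining $\chi_f(G)$ gives the common value $\frac{1}{\chi_f(G)}$. This is precisely the argument the paper intends, so there is nothing to add.
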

Independent sets are related to many other combinatorial structures in graphs, such as vertex covers.
A set~$C \subseteq V$ is a \emph{vertex cover} if and only if~$V \setminus C$
is an independent set in~$G$.
Because vertex covers arise from the complement of independent sets, the set of all vertex covers~$K$ can be seen as an independence system, with the inclusion relation ``reversed.''
That is, for every~$C \in K$ we also have that~$C' \in K$ for every superset~$C' \supseteq C$.
We call~$(V, K)$ a \emph{reversed independence system}.

Notice that~$V \in K$, and hence a uniform fairness probability of~$1$ can be trivially achieved by only selecting the vertex cover~$V$, the complete vertex set, in the probability distribution.
Therefore, it makes sense to \emph{minimize} the fairness probabilities~$p_U$ and~$p_R$ in the setting of reversed independence systems.
Indeed, this is equivalent to maximizing the probability of not being selected in, e.g., a vertex cover.
This is applicable in situations where it is not desirable to be in the selected vertex cover, for example when being selected results in having to perform a tedious or unwanted task.
Let~$\pu{vc},\pr{vc}$ denote uniform fairness and Rawlsian justice for fair vertex cover, respectively.
Due to the one-to-one correspondence between independent sets and vertex covers via the complement with the vertex set, we thus have the following result.

\begin{theorem}
    For any graph~$G$, $\pu{vc} = \pr{vc} = 1 - \frac{1}{\chi_f(G)}$.
\end{theorem}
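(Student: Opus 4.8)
The plan is to exploit the bijection between vertex covers and independent sets given by complementation, and to translate the fairness LP for reversed independence systems into the corresponding LP for independence systems, thereby reducing everything to the corollary $\pu{is} = \pr{is} = \frac{1}{\chi_f(G)}$ already established. First I would set up the correspondence at the level of probability distributions: a distribution $\set{x_C}_{C \in K}$ over vertex covers induces a distribution $\set{y_S}_{S \in I}$ over independent sets by $y_S \coloneqq x_{V \setminus S}$, and this map is a bijection between the feasible regions of the two probability simplices. Under this map, for a fixed vertex $v$, the event ``$v \in C$'' corresponds to the event ``$v \notin S$'', so $\sum_{C \in K_v} x_C = 1 - \sum_{S \in I_v} y_S$. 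Hence if every vertex is covered (in the vertex-cover sense) with probability exactly $q$, then every vertex is in the sampled independent set with probability exactly $1 - q$, and vice versa.

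Next I would argue about the optimization direction. Because $V \in K$, setting $x_V = 1$ gives a distribution in which every vertex is covered with probability $1$, so the \emph{maximum} uniform probability is trivially $1$; the meaningful quantity, as the text explains, is the \emph{minimum} over feasible uniform distributions of the common covering probability. By the bijection above, minimizing the uniform covering probability $q$ over distributions on $K$ is the same as maximizing $1 - q$, i.e.\ maximizing the uniform selection probability over distributions on $I$, which by definition is $\pu{is}$. Therefore $\pu{vc} = 1 - \pu{is} = 1 - \frac{1}{\chi_f(G)}$ by the corollary. The same argument applies verbatim to the Rawlsian variant: minimizing the \emph{maximum} covering probability over $v \in V$ corresponds, after complementation, to maximizing the \emph{minimum} selection probability over $v \in V$, which is $\pr{is}$; hence $\pr{vc} = 1 - \pr{is} = 1 - \frac{1}{\chi_f(G)}$. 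Since $\pu{is} = \pr{is}$ by Theorem~\ref{thm:indepsys:pu-pr}, both values coincide and equal $1 - \frac{1}{\chi_f(G)}$.

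The main obstacle, such as it is, is bookkeeping rather than mathematics: one must be careful to state precisely what ``$\pu{vc}$'' and ``$\pr{vc}$'' mean in the reversed setting (minimization rather than maximization, and of which quantity), because the defining LPs~\eqref{eq:lp:pu} and~\eqref{eq:lp:pr} are written as maximizations and must be mirrored. A clean way to avoid any sign confusion is to write down the LP for $\pu{vc}$ explicitly — maximize $1 - p$ subject to $\sum_{C \in K_v} x_C = p$ for all $v$, $\sum_{C \in K} x_C = 1$, $x \ge 0$ — substitute $x_C = y_{V \setminus C}$, observe that $K_v$ maps bijectively onto $I \setminus I_v$ so the equality becomes $\sum_{S \in I_v} y_S = 1 - p$, and recognize the resulting program as exactly~\eqref{eq:lp:pu} for $(V, I)$ with objective $1-p$ relabelled. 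The optimal value is then $1 - \pu{is} = 1 - \frac{1}{\chi_f(G)}$, and the Rawlsian case is identical with equalities replaced by the appropriate inequalities. No new combinatorial input beyond the independent-set corollary is needed.
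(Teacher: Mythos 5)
Your proof is correct and follows essentially the same route as the paper, which justifies the theorem precisely via the complementation bijection between vertex covers and independent sets (with the fairness measures mirrored to minimizations in the reversed system), reducing to $\pu{is} = \pr{is} = \frac{1}{\chi_f(G)}$. Your explicit LP translation $x_C = y_{V\setminus C}$, $\sum_{C \in K_v} x_C = 1 - \sum_{S \in I_v} y_S$ just spells out the correspondence the paper leaves implicit.
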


We have a similar relation for cliques in the graph.
A subset~$C \subseteq V$ is a \emph{clique} if and only if~$C$ is an independent set in
the complement graph~$\bar G = (V, \bar E)$, where~$\bar E$ is the set of non-edges in~$G$.
Let~$\bar \chi_f = \chi_f(\bar G)$ denote the fractional vertex coloring number of~$\bar G$.
With~$\pu{cl},\pr{cl}$ denoting uniform fairness and Rawlsian justice for fair clique, we obtain the following result.
\begin{theorem}
    For any graph~$G$, $\pu{cl} = \pr{cl} = \frac{1}{\bar \chi_f}$.
\end{theorem}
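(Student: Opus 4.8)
The plan is to reduce this statement to the already-established characterisation of uniform fairness and Rawlsian justice for fair independent set, applied to the complement graph $\bar G$ rather than to $G$ itself.

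First I would observe that the collection $\mathcal C$ of all cliques of $G$ forms an independence system on ground set $V$: the empty set is a clique, every singleton $\set{v}$ is a clique (so every element of $V$ lies in some member of $\mathcal C$, as required by the set-system framework), and any subset of a clique is again a clique. Moreover, by definition $C \subseteq V$ is a clique in $G$ if and only if $C$ is an independent set in $\bar G$, so the set system $(V, \mathcal C)$ that models fair clique in $G$ is literally identical to the independence system $(V, I_{\bar G})$, where $I_{\bar G}$ denotes the collection of independent sets of $\bar G$.

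Next I would invoke Theorem~\ref{thm:indepsys:pu-pr} for the independence system $(V, \mathcal C)$, which immediately gives $\pu{cl} = \pr{cl}$. To pin down the common value, I would apply Lemma~\ref{lemma:pr-frac-hypergraph-cov} to the hypergraph $\mathcal H = (V, \mathcal C)$, obtaining $\pr{cl} = \frac{1}{k^{\ge}_f(\mathcal H)}$. Since $\mathcal H = (V, I_{\bar G})$, its fractional covering number is exactly the fractional vertex coloring number of $\bar G$ as defined in Section~\ref{subsec:independent-set}, that is, $k^{\ge}_f(\mathcal H) = \chi_f(\bar G) = \bar\chi_f$. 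Combining these equalities yields $\pu{cl} = \pr{cl} = \frac{1}{\bar\chi_f}$. Equivalently, one may simply quote the corollary $\pu{is} = \pr{is} = \frac{1}{\chi_f(G)}$ proved for fair independent set, read off with $\bar G$ in place of $G$, and note that fair independent set on $\bar G$ is the same problem as fair clique on $G$.

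There is essentially no hard step here; the only point that needs a moment's care is the bookkeeping identity that the fractional covering number of the hypergraph whose hyperedges are the cliques of $G$ coincides with $\chi_f(\bar G)$, the fractional covering number of the hypergraph whose hyperedges are the independent sets of $\bar G$. This is immediate from $\mathcal C = I_{\bar G}$ together with the definition of $\chi_f$, so the argument is a short assembly of Theorem~\ref{thm:indepsys:pu-pr} and Lemma~\ref{lemma:pr-frac-hypergraph-cov} applied to $\bar G$.
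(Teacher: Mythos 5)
Your proposal is correct and matches the paper's argument: the paper likewise identifies cliques of $G$ with independent sets of $\bar G$ and then reads off the result from Theorem~\ref{thm:indepsys:pu-pr} together with the independent-set characterization $\pu{is} = \pr{is} = \frac{1}{\chi_f(\cdot)}$ applied to $\bar G$. Your extra remark that singletons are cliques (so the set-system assumption is met) is a harmless and correct piece of bookkeeping.
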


\subsection{Individually-Fair Matching for Vertices} \label{sec:matching-vertex}
In this section, we focus on fair matching for vertices.
Interestingly, as we describe in the next paragraph, the corresponding set system is \emph{not} an independence system, and hence Theorem~\ref{thm:indepsys:pu-pr} does not apply.
And indeed, instances exist where $p_R > p_U$, see for example the graph in Figure~\ref{fig:punotpr}.

\begin{figure}
    \centering
    \def\ebmfigscale{.65}
    \begin{tikzpicture}[scale=\ebmfigscale,xstep=2,ystep=2]
    \definecolor{baseorange}{HTML}{D55E00}
    \definecolor{baseblue}{HTML}{0072B2}
    \definecolor{basegreen}{HTML}{009E73}
    \definecolor{textcolor}{HTML}{000000}
    \definecolor{tuescarlet}{RGB}{200,24,24}
    \def\radius{1.75mm}
    \def\lw{.4mm}

    \begin{scope}[xscale=2,yscale=2]
        \coordinate (1) at (0.134, .5);
        \coordinate (2) at (0.134, -.5);
        \coordinate (3) at (1, 0);
        \coordinate (4) at (2, 0);
        \coordinate (5) at (3, 0);
    \end{scope}

    \draw[line width=\lw,draw=textcolor] (1) -- (2) node[midway,left] {$a$};
    \draw[line width=\lw,draw=textcolor] (2) -- (3) node[midway,below] {$b$};
    \draw[line width=\lw,draw=textcolor] (1) -- (3) node[midway,above] {$c$};
    \draw[line width=\lw,draw=textcolor] (3) -- (4) node[midway,above] {$d$};
    \draw[line width=\lw,draw=textcolor] (4) -- (5) node[midway,above] {$e$};

    \foreach \v in {1, ..., 5} {
        \fill[fill=textcolor] (\v) circle[radius=\radius];
    }

\end{tikzpicture}
    \caption{A graph with edges~$\set{a, b, c, d, e}$, where for the fair matching for vertices problem, $p_R = \nicefrac 3 4$ and~$p_U = \nicefrac 2 3$. The probability~$p_R$ can be achieved by selecting each of the matchings~$\set{a, e}$, $\set{b, e}$, $\set{c, e}$, $\set{a, d}$ with probability~$\nicefrac 1 4$, whereas~$p_U$ can be achieved by selecting the matchings~$\set{a}$, $\set{b, e}$, $\set{c, e}$ with probability~$\nicefrac 1 3$.}
    \label{fig:punotpr}
\end{figure}
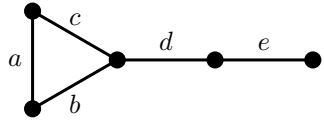

Let~$G=(V, E)$ be a graph and let~$M$ denote the set of all matchings.
For a matching~$m \in M$, let~$V(m) \subseteq V$ denote the set of
vertices covered by the matching~$m$.
Define~$V(M) \coloneqq \set{V(m) : m \in M}$, and let~$M_v \subseteq M$ denote the set of matchings that cover the vertex~$v$.
We can then apply our framework from Section~\ref{sec:model} to the set system~$(V, V(M))$.
Notice however, that this system is not an independence system. Indeed, one cannot remove a single vertex from a subset~$m \in V(M)$ to obtain a different subset in~$V(M)$, as these sets correspond to the endpoints of edges.
We discuss the uniform fairness measure~$p_U$ in Section~\ref{subsec:unifair} and Rawlsian justice~$p_R$ in Section~\ref{subsec:rawlsfair}.

\subsubsection{Uniform Fairness}
\label{subsec:unifair}
We first focus on the uniform fairness measure~$p_U$.
One can immediately observe that if a graph has a perfect matching, then choosing only this perfect matching in the support of the probability distribution~$x$ yields~$p_U = 1$.
The other direction of the statement also holds.

\begin{lemma}
    $G$ has a perfect matching if and only if $p_U = 1$.
\end{lemma}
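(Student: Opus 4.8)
The plan is to prove the two directions separately. The forward direction (perfect matching $\Rightarrow p_U = 1$) was already noted in the text: if $m^\star$ is a perfect matching, then $V(m^\star) = V$, so setting $x_{V(m^\star)} = 1$ and all other $x$-values to zero gives a feasible solution of \eqref{eq:lp:pu} in which every vertex is covered with probability $1$; since $p_U \le 1$ always, this yields $p_U = 1$. So the substance is the converse.

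For the converse, I would argue the contrapositive: if $G$ has no perfect matching, then $p_U < 1$. The key structural fact is that if $G$ has no perfect matching, then no single matching covers all of $V$, i.e., for every $m \in M$ there is at least one uncovered vertex. Now suppose, for contradiction, that $p_U = 1$, witnessed by a probability distribution $\{x_m\}_{m \in M}$ (indexed here by matchings rather than their vertex sets, which is harmless). Summing the uniform-fairness constraints \eqref{cons:pu:uniform} over all $v \in V$ gives $\sum_{v \in V} \sum_{m \in M_v} x_m = |V| \cdot p_U = |V|$. On the other hand, interchanging the order of summation, $\sum_{v \in V} \sum_{m \in M_v} x_m = \sum_{m \in M} x_m \, |V(m)|$. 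Since $|V(m)| \le |V|$ for every $m$, with strict inequality whenever $x_m > 0$ (such an $m$ does not cover all vertices, as there is no perfect matching), and since the $x_m$ with $x_m > 0$ sum to $1$, we get $\sum_{m \in M} x_m |V(m)| < |V|$, a contradiction. Hence $p_U < 1$.

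I expect the main (minor) obstacle to be bookkeeping about the index set: the LP \eqref{eq:lp:pu} is written over $M = V(M)$, the collection of vertex sets of matchings, whereas it is more natural to talk about matchings themselves; one should either work directly with vertex sets $S \in V(M)$ (noting $|S| < |V|$ for all such $S$ when no perfect matching exists, since $|S|$ is even and a vertex set of size $|V|$ would force a perfect matching) or pass to a distribution over matchings, which only loses information and does not affect the averaging argument. Another small point to state carefully is why "$x_m > 0$ implies $|V(m)| < |V|$": a matching covering all of $V$ is by definition a perfect matching, so its existence contradicts our assumption. Everything else is the one-line double-counting computation above, so no lengthy calculation is needed.
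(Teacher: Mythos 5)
Your proposal is correct and is essentially the paper's argument: the paper's converse direction simply observes that under a distribution achieving $p_U = 1$ every sampled matching must cover all vertices, hence is perfect, and your contrapositive double-counting of the constraints~\eqref{cons:pu:uniform} is just a more formal rendering of that same observation. No gap; the extra bookkeeping about indexing by $V(M)$ versus matchings is harmless, as you note.
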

\begin{proof}
    For the forward direction, we can trivially select the perfect
    matching (with probability~$1$), which covers every vertex.
    For the other direction, assume that~$p_U = 1$ with an optimal
    solution~$x$.
    If we now sample a matching~$m$ according to the probability
    distribution~$x$, then every vertex needs to be in~$m$, and
    hence~$m$ is a perfect matching.
\end{proof}

In general, there also exist graphs for which~$p_U = 0$, for which a characterization is somewhat more involved.
For this, we will use the dual of the LP in~\eqref{eq:lp:pu} applied to the set system~$(V, V(M))$:
\begin{subequations}
    \label{eq:ip:v:pdual}
    \begin{alignat}{7}
        &&\text{minimize}\quad && \beta &&& \\
        &&\text{subject to}\quad && \sum_{v \in V_m} \alpha_v + \beta
        &\ge 0 &&\qquad \forall {m \in M} \\
        &&&& \sum_{v \in V}\alpha_v &= -1, &&\\
        &&&& \alpha_v &\in \reals &&\qquad\forall {v \in V},\\
        &&&& \beta &\in \reals. &&
    \end{alignat}
\end{subequations}
This dual program has~$|M| + 1$ constraints, but for determining whether~$p_U = 0$, it is in fact sufficient to only consider~$|E| + 1$ dual constraints.
This observation is formalized in the following lemma.
\begin{lemma}
    \label{lem:easylife}
    For any graph~$G$, $p_U = 0$ if and only if there
    is an~$\alpha \in \reals^V$ such that~$\alpha_u + \alpha_v \ge 0$ for
    all~$\set{u, v} \in E$ and~$\sum_{v \in V} \alpha_v < 0$.
    Moreover, the set $\set{(x, p) \in \reals_+^E \times \reals_+ : \text{$\sum_{e \in \delta(v)} x_e = p \quad \forall v \in V$ and $\sum_{e \in E} x_e = 1$}}$ is nonempty if and only if~$p_U > 0$.
\end{lemma}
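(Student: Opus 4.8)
The plan is to prove both biconditionals by manipulating the primal LP~\eqref{eq:lp:pu} (applied to $(V, V(M))$) and its dual~\eqref{eq:ip:v:pdual}. The key observation that makes everything work is that the matchings appearing in the support of an optimal primal solution can be taken to be single edges: every matching $m$ is a \emph{disjoint} union of its edges, so the vertex-coverage pattern of a weight placed on $m$ can be reproduced by distributing that weight across the constituent edges of $m$ — the sum $\sum_{e \in \delta(v)}$ over such edge-weights equals exactly the coverage contributed by $m$ at $v$. Hence the LP that restricts the support to single-edge matchings (plus the empty matching) has the same optimal value as~\eqref{eq:lp:pu}. This immediately gives the "moreover" statement: the displayed set is precisely the set of feasible $(x, p)$ for that restricted LP with $p$ as the common coverage value, so it is nonempty with some $(x, p)$ iff the restricted LP is feasible with positive objective, which happens iff $p_U > 0$. (One has to note $p > 0$ is automatic once the set is nonempty and $\sum_e x_e = 1$, since then $\sum_{v} \sum_{e \in \delta(v)} x_e = 2 \sum_e x_e = 2 > 0$, forcing $p > 0$ for at least one, hence for all, vertices.)

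For the first biconditional, I would argue via LP duality on this restricted edge-LP. First I would write down its dual: minimize $\beta$ subject to $\alpha_u + \alpha_v + \beta \ge 0$ for all $\set{u,v} \in E$, the constraint from $\emptyset$ giving $\beta \ge 0$, and $\sum_{v \in V} \alpha_v = -1$. By the same support-reduction argument this dual has the same optimal value as~\eqref{eq:ip:v:pdual}, so $p_U$ equals the optimal value of the restricted primal, which by strong duality equals the optimal $\beta$ of this restricted dual (feasibility of both sides is guaranteed since $\emptyset \in M$). Now $p_U = 0$ iff this dual has optimal value $0$; since $\beta = 0$ forces the constraints $\alpha_u + \alpha_v \ge 0$ for all edges together with $\sum_v \alpha_v = -1$, and conversely any such $\alpha$ (rescaled so that $\sum_v \alpha_v = -1$, starting from any $\alpha$ with $\alpha_u+\alpha_v \ge 0$ on edges and $\sum_v \alpha_v < 0$) yields a dual-feasible point with $\beta = 0$, we get exactly the stated condition. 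I should be slightly careful about the direction where $\sum_v \alpha_v < 0$ rather than $= -1$: scaling $\alpha$ by the positive constant $-1/\sum_v\alpha_v$ preserves the edge inequalities (they are homogeneous of degree one in $\alpha$ when $\beta = 0$) and normalizes the sum, so this is harmless.

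The main obstacle I anticipate is making the support-reduction step fully rigorous in both the minimization-is-unbounded direction and the primal direction — specifically, verifying that passing from a general matching-weighted solution to an edge-weighted one is an \emph{equivalence} of optimal values rather than merely an inequality, and that infeasibility/unboundedness is preserved. For the primal this is clear (edge-matchings are a subset of all matchings with the coverage-additivity property described, so the edge-LP's optimum is $\le p_U$, and conversely splitting any matching-solution into edges shows $\ge$, giving equality). For the "$p_U = 0$" direction one must check that when $p_U = 0$ the only feasible primal solution puts all weight on $\emptyset$, so that the dual's optimal $\beta$ is genuinely $0$ and not merely $\le$ something — but this follows from the established equality of optimal values together with strong duality, so no separate argument is needed. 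The remaining verifications (dual derivation, feasibility of $\emptyset$, the arithmetic $\sum_v\sum_{e\in\delta(v)} x_e = 2\sum_e x_e$) are routine.
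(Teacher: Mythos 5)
Your central ``support-reduction'' claim --- that the LP restricted to single-edge matchings (plus $\emptyset$) has the same optimal value as~\eqref{eq:lp:pu}, and likewise that its dual has the same optimal value as~\eqref{eq:ip:v:pdual} --- is false, and both halves of your argument rest on it. Distributing the weight $x_m$ of a matching $m$ over its constituent edges does preserve the coverage of every vertex (a covered vertex lies in exactly one edge of $m$), but it multiplies the contribution of $m$ to the normalization constraint $\sum_{m\in M} x_m = 1$ by $|m|$, so the resulting edge-weighting is \emph{not} feasible for your restricted LP; your ``conversely splitting any matching-solution into edges shows $\ge$'' step fails. Concretely, for the path on four vertices one has $p_U = 1$ (put all weight on the perfect matching), while the single-edge LP has optimum $\nicefrac 1 2$; for the graph of Figure~\ref{fig:punotpr} one gets $\nicefrac 2 5$ versus $p_U = \nicefrac 2 3$. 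On the dual side the same issue appears: summing the edge constraints over the vertex-disjoint edges of $m$ gives $\sum_{v \in V(m)} \alpha_v + |m|\beta \ge 0$, which implies the matching constraint of~\eqref{eq:ip:v:pdual} only when $\beta = 0$, so the restricted dual is a strict relaxation in general. Consequently the assertion ``$p_U$ equals the optimal value of the restricted primal, which by strong duality equals the optimal $\beta$ of this restricted dual'' is unjustified (and false), and the second ``iff'' in your treatment of the moreover-part inherits the same gap.

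The lemma itself is of course still true, and your edge-decomposition idea is the right raw material, but it must be deployed exactly where it is valid, which is what the paper does: for the first biconditional, an optimal dual solution with $\beta = 0$ restricts to the edge constraints (single edges are matchings), and conversely any $\alpha$ with $\alpha_u+\alpha_v \ge 0$ on all edges and negative sum, after rescaling, satisfies \emph{all} matching constraints of~\eqref{eq:ip:v:pdual} with $\beta = 0$ because the $\beta$-term does not get multiplied; weak duality then forces $p_U = 0$. This $\beta=0$ argument is precisely the missing piece your proposal replaces by the false equivalence of optima. For the moreover-part, your map can be repaired by renormalizing: from an optimal $x^*$ with $p_U > 0$ set $y_e = \sum_{m \ni e} x^*_m$, which has uniform coverage $p_U$ but total weight $\sum_m |m| x^*_m \neq 1$ in general, and then divide by that total; conversely a point of the displayed set is itself a feasible solution of~\eqref{eq:lp:pu} supported on single-edge matchings with $p = \nicefrac{2}{|V|} > 0$ (your double-counting remark). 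That repaired route is legitimate (and arguably simpler than the paper's Farkas-lemma argument for this part), but as written your proof contains a false key step and does not establish either direction that needs it.
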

\begin{proof}
    We start with the proof of the first part of the lemma.
    For the forward direction, assume that~$p_U = 0$.
    Then, there exists a dual solution~$(\alpha, \beta)$ with~$\beta = 0$ for~\eqref{eq:ip:v:pdual}.
    Notice that single edges are matchings (of cardinality~$1$), and thus feasibility of the dual solution yields~$\alpha_u + \alpha_v \ge 0$ for
    all~$\set{u, v} \in E$ and~$\sum_{v \in V} \alpha_v = -1 < 0$.

    For the other direction, assume that we have~$\alpha \in \reals^V$ such that~$\alpha_u + \alpha_v \ge 0$ for
    all~$\set{u, v} \in E$ and~$\sum_{v \in V} \alpha_v < 0$.
    Note that, without loss of generality, we may assume that~$\sum_{v \in V} \alpha_v = -1$ by rescaling the solution without violating the constraint for every edge.
    Consider a matching~$m \in M$.
    Notice that the edges in a matching have
    disjoint endpoints, and hence
    \[
        \sum_{v \in V_m} \alpha_v = \sum_{\set{u,v} \in m} [\alpha_u + \alpha_v] \ge 0
    \]
    is trivially satisfied.
    Therefore, $\alpha$ is a feasible solution for the dual in~\eqref{eq:ip:v:pdual} with~$\beta = 0$.
    By weak duality, we have that~${p_U = 0}$.

    For the second part, assume that we have~$p_U > 0$ and that the set~$\set{(x, p) \in \reals_+^E \times \reals_+ : \text{$\sum_{e \in \delta(v)} x_e = p \quad \forall v \in V$ and $\sum_{e \in E} x_e = 1$}}$ is empty.
    Note that we can write this system as~$Ay = b$, $y \ge 0$ where~$y = (x, p)^\top \in \reals_+^E \times \reals_+$ with a suitable constraint matrix~$A$ and vector~$b$.
    Then, by applying Farkas' lemma to this system, there exists a vector~$z = (\alpha, \beta) \in \reals^V \times \reals$ such that~$z^\top b < 0$, $y^\top A \ge 0$.
    Or, equivalently, there exist~$\alpha \in \reals^V$ and~$\beta \in \reals$ such that
    \[
        \beta < 0,\qquad \alpha_u + \alpha_v + \beta \ge 0\quad \forall \set{u,v} \in E,\qquad \sum_{v \in V} \alpha_v \le 0.
    \]
    Eliminating~$\beta$, we can equivalently formulate this as~$\alpha_u + \alpha_v > 0$ for all~$\set{u,v} \in E$ and~$\sum_{v \in V} \alpha_v \le 0$.
    Fix a vertex~$v \in V$ and define for some~$\epsilon > 0$,
    \[
        \alpha'_u = \begin{cases}
                       \alpha_v - \epsilon, & \text{if $u = v$,} \\
                       \alpha_u, & \text{otherwise.}
        \end{cases}
    \]
    Choosing~$\epsilon$ sufficiently small, we have that~$\alpha'_u + \alpha'_v \ge 0$ for all edges~$\set{u,v} \in E$ and~$\sum_{v \in V} \alpha_v < 0$.
    By the first part of the lemma, this implies that~$p_U = 0$, a contradiction.
    Hence, by Farkas' lemma the original system is feasible.
    The other direction is analogous, using the bidirectionality of Farkas' lemma.
\end{proof}
The first part of the above lemma shows that for determining whether~$p_U = 0$, we only
need~$|E| + 1$ dual constraints, instead of constraints for every matching.
We will use this to first characterize when~$p_U = 0$ for the easier class of bipartite graphs.
\begin{lemma}
    If~$G$ is bipartite and contains no perfect matching, then~$p_U = 0$.
\end{lemma}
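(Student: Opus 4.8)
The plan is to apply the first part of Lemma~\ref{lem:easylife}, which states that $p_U = 0$ precisely when there is a vector $\alpha \in \reals^V$ with $\alpha_u + \alpha_v \ge 0$ for every edge $\set{u, v} \in E$ and $\sum_{v \in V} \alpha_v < 0$. Hence it suffices to construct such an $\alpha$ out of the hypotheses that $G$ is bipartite and has no perfect matching. The natural candidate is built from a minimum vertex cover, since a vertex cover is exactly what makes the edge constraints $\alpha_u + \alpha_v \ge 0$ cheap to satisfy.

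Concretely, I would first invoke the K\"onig--Egerv\'ary theorem: since $G$ is bipartite, a minimum vertex cover $C \subseteq V$ satisfies $|C| = \nu(G)$. Because $G$ has no perfect matching, $\nu(G) < \tfrac{|V|}{2}$ (whether or not $|V|$ is even), so $|C| < \tfrac{|V|}{2}$. I then set $\alpha_v = 1$ for $v \in C$ and $\alpha_v = -1$ for $v \in V \setminus C$. For any edge $\set{u, v} \in E$, the defining property of a vertex cover gives $u \in C$ or $v \in C$, whence $\alpha_u + \alpha_v \ge 1 + (-1) = 0$; and $\sum_{v \in V} \alpha_v = |C| - (|V| - |C|) = 2|C| - |V| < 0$. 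The first part of Lemma~\ref{lem:easylife} then yields $p_U = 0$, as claimed.

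I do not expect a genuine obstacle: the statement follows from the dual characterization already established, and the only points requiring a little care are that K\"onig's theorem applies to every bipartite graph (including those with isolated vertices or several components) and the strict inequality $\nu(G) < \tfrac{|V|}{2}$. If one prefers a self-contained argument avoiding K\"onig's theorem, one can split on the bipartition $(L, R)$: if $|L| \neq |R|$, take $\alpha \equiv -1$ on the larger side and $\alpha \equiv 1$ on the smaller side; if $|L| = |R|$, then Hall's condition must fail for some $S \subseteq L$ with $|N(S)| < |S|$, and one sets $\alpha_v = 1$ on $(L \setminus S) \cup N(S)$ and $\alpha_v = -1$ on $S \cup (R \setminus N(S))$, which again satisfies every edge constraint and has negative total weight $2\bigl(|N(S)| - |S|\bigr)$.
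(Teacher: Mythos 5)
Your proof is correct, but your main construction of the dual certificate takes a different route from the paper's. Both you and the paper reduce the claim to exhibiting an $\alpha \in \reals^V$ as in the first part of Lemma~\ref{lem:easylife}; the paper obtains one from Hall's theorem, taking a deficient set $S$ with $|N(S)| < |S|$ and setting $\alpha$ to $1$ on $N(S)$, $-1$ on $S$, and $0$ elsewhere, so the certificate is supported only on $S \cup N(S)$. You instead invoke the K\"onig--Egerv\'ary theorem: a minimum vertex cover $C$ satisfies $|C| = \nu(G) < \frac{|V|}{2}$, and the $\pm 1$ vector that is $+1$ on $C$ and $-1$ on $V \setminus C$ does the job. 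Your route buys a slightly cleaner verification -- the edge constraints follow in one line from the covering property -- and it sidesteps the small subtlety of which side of the bipartition Hall's condition should be applied to when the two sides have unequal sizes (your strict inequality $\nu(G) < \frac{|V|}{2}$ handles odd $|V|$ and unbalanced bipartitions uniformly, whereas the paper's phrasing of the Hall step glosses over this). Your self-contained fallback in the last paragraph is essentially the paper's argument: the case $|L| = |R|$ reproduces the Hall deficiency construction (with $\pm 1$ values instead of the paper's $1$, $-1$, $0$ assignment), and the preliminary case $|L| \neq |R|$ is exactly the side-size issue made explicit. Both versions are valid applications of Lemma~\ref{lem:easylife}.
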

\begin{proof}
    Let~$U,V$ denote the vertex partition of the two sides of the graph.
    As~$G$ contains no perfect matching, we know via Hall's theorem that
    there exists a subset~$S \subseteq U$ such that~$|N(S)| < |S|$.
    Define now a solution~$\alpha$ to the program~\eqref{eq:ip:v:pdual} as
    \[
        \alpha_v = \begin{cases}
                       1, & \text{if $v \in N(S)$,} \\
                       -1, & \text{if $v \in S$,} \\
                       0, & \text{otherwise.}
        \end{cases}
    \]
    Then $\sum_{v \in V} \alpha_v = |N(S)| - |S| < 0$.
    Consider an edge~$\set{u, v} \in E$.
    Suppose that one of the endpoints is in~$S$, then $\alpha_u + \alpha_v = 1 - 1 = 0$.
    If neither endpoint is in~$S$, then both~$\alpha_u, \alpha_v \ge 0$
    and thus~$\alpha_u + \alpha_v \ge 0$ as well.
    The result now follows by the first part of Lemma~\ref{lem:easylife}.
\end{proof}
The result here might be surprising, as for bipartite graphs~$p_U$ is
either $0$ or $1$, and completely determined by the existence of a perfect matching.

This result generalizes to general graphs in the following manner.
Whether it holds that~$p_U = 0$ depends on the existence of a \emph{fractional perfect
matching} in the graph.
A \emph{fractional matching} is a valuation of the edges~$y_e \in [0, 1]$
such that for every vertex~$v$ we have~$\sum_{e \in \delta(v)} y_e \le 1$.
A \emph{fractional perfect matching} is a fractional matching such
that~$\sum_{e \in \delta(v)} y_e = 1$, or, equivalently,
$\sum_{e \in E} y_e = \frac{|V|}{2}$.

\begin{theorem}
    \label{thm:v:pupositive}
    The following statements are equivalent for any graph~$G$:
    \begin{enumerate}[label=(\roman*)]
        \item $p_U > 0$.
        \item $G$ has a fractional perfect matching.
        \item For every independent set~$S$ in~$G$, we have~$|S| \le |N(
        S)|$.
        \item $G$ has a \emph{Q-factor}: a spanning subgraph where every
        component is regular, with either degree~$1$ on exactly two vertices, or
        degree~$2$ on an odd number of vertices.
        \item There is a partition~$\set{V_1, \dots, V_k}$ of the vertex
        set~$V$ such that, for each~$i$, the graph~$G[V_i]$ is either~$
        K_2$ or a Hamiltonian graph on an odd number of vertices.
        \item $G$ has a
        spanning subgraph where every component is regular with degree
        at least~$1$.
        \item The number of isolated vertices in $G - S$ is at most $|S|$
        for every $S \subseteq V$.
    \end{enumerate}
\end{theorem}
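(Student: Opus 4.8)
The plan is to prove (i)~$\Leftrightarrow$~(ii), then (ii)~$\Rightarrow$~(iii)~$\Rightarrow$~(vii)~$\Rightarrow$~(ii), and adjoin the side equivalences (ii)~$\Leftrightarrow$~(iv)~$\Leftrightarrow$~(v)~$\Leftrightarrow$~(vi); only (vii)~$\Rightarrow$~(ii) requires real work. Throughout I would read (ii) as the statement that the \emph{fractional perfect matching polytope} $\mathcal M_f \coloneqq \{\, y\in\reals_+^E : \sum_{e\in\delta(v)} y_e = 1 \text{ for all } v\in V \,\}$ is nonempty. For (i)~$\Leftrightarrow$~(ii): the second part of Lemma~\ref{lem:easylife} says $p_U>0$ exactly when there is $(x,p)\in\reals_+^E\times\reals_+$ with $\sum_{e\in\delta(v)}x_e=p$ for all $v$ and $\sum_{e\in E}x_e=1$. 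Any such point has $p>0$ (otherwise $x=0$, contradicting $\sum_e x_e=1$), so $x/p\in\mathcal M_f$; conversely every $y\in\mathcal M_f$ satisfies $\sum_{e\in E}y_e=\tfrac{|V|}{2}$, so $(x,p)\coloneqq(\tfrac{2}{|V|}y,\tfrac{2}{|V|})$ is the required point.

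The remaining easy implications are counting arguments. For (ii)~$\Rightarrow$~(iii): if $y\in\mathcal M_f$ and $S$ is independent, then no edge has both ends in $S$, so $|S|=\sum_{v\in S}\sum_{e\in\delta(v)}y_e$ equals the total $y$-weight on the edges between $S$ and $N(S)$, which is at most $\sum_{v\in N(S)}\sum_{e\in\delta(v)}y_e=|N(S)|$. For (iii)~$\Rightarrow$~(vii): for any $S\subseteq V$ the set $I$ of vertices isolated in $G-S$ is independent with $N(I)\subseteq S$, so (iii) gives $|I|\le|N(I)|\le|S|$. For (ii)~$\Leftrightarrow$~(iv)~$\Leftrightarrow$~(v) I would invoke half-integrality of the fractional matching polytope (see~\cite{scheinerman2011fractional}): $\mathcal M_f$ is a polytope all of whose vertices are $\{0,\tfrac12,1\}$-valued, so $\mathcal M_f\neq\emptyset$ iff it has such a vertex $y$; for such $y$ the edges with $y_e=1$ form a matching covering part of $V$, while the edges with $y_e=\tfrac12$ induce a $2$-regular graph on the remaining vertices whose cycles are all odd (an even cycle would permit an alternating $\pm\epsilon$ perturbation of $y$, contradicting extremality). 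This subgraph is exactly a $Q$-factor, giving (ii)~$\Leftrightarrow$~(iv), and (v) merely rephrases the component structure of a $Q$-factor (edge components induce $K_2$, odd-cycle components induce Hamiltonian graphs on an odd number of vertices; conversely such a partition yields a $Q$-factor by taking the edge, resp.\ an odd Hamiltonian cycle, inside each class). Finally (iv)~$\Rightarrow$~(vi) is immediate, and (vi)~$\Rightarrow$~(ii) follows by assigning weight $1/d_i$ to every edge of a $d_i$-regular component ($d_i\ge1$) of the witnessing spanning subgraph, which produces a point of $\mathcal M_f$.

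It remains to prove (vii)~$\Rightarrow$~(ii), which I expect to be the crux. I would use LP duality between the fractional matching number $\nu_f(G)=\max\{\, \sum_e y_e : y\ge0,\ \sum_{e\in\delta(v)}y_e\le1\ \forall v \,\}$ and the fractional vertex cover number $\tau_f(G)=\min\{\, \sum_v z_v : z\ge0,\ z_u+z_v\ge1\ \forall\{u,v\}\in E \,\}$, together with the observation that $\mathcal M_f\neq\emptyset$ iff $\nu_f(G)=\tfrac{|V|}{2}$ (this also matches the certificate in the first part of Lemma~\ref{lem:easylife}, after the shift $z_v=\alpha_v+\tfrac12$). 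Using the classical fact that $\tau_f(G)$ is attained at a half-integral $z^*\in\{0,\tfrac12,1\}^V$, put $Z_j\coloneqq\{v:z^*_v=j\}$; the edge constraints force $Z_0$ to be independent with $N(Z_0)\subseteq Z_1$, and a short count gives $\tfrac{|V|}{2}-\nu_f(G)=\tfrac12(|Z_0|-|Z_1|)$. Since every vertex of $Z_0$ is isolated in $G-Z_1$, applying (vii) with $S=Z_1$ forces $|Z_0|\le|Z_1|$, hence $\nu_f(G)\ge\tfrac{|V|}{2}$; combined with the trivial bound $\nu_f(G)\le\tfrac{|V|}{2}$ this gives (ii) and closes the cycle. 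The main obstacle is genuinely this step, which is a fractional analogue of Tutte's $1$-factor theorem; the only real decision is whether to prove the half-integrality of the optimal fractional vertex cover in-line (a short argument) or to cite it, and everything else is bookkeeping.
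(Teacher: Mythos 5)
Your proof is correct, but it takes a genuinely more self-contained route than the paper. The paper only proves the pieces involving the new items (i) and (vi) --- namely (i)$\Leftrightarrow$(ii) via the second part of Lemma~\ref{lem:easylife}, (iv)$\Rightarrow$(vi) trivially, and (vi)$\Rightarrow$(ii) by the $1/d$-weighting --- and simply cites Tutte for (iii)$\Leftrightarrow$(iv) and Scheinerman--Ullman for the equivalences with (ii), (v) and (vii). Your treatment of (i)$\Leftrightarrow$(ii) and of (vi)$\Rightarrow$(ii) coincides with the paper's; the difference is that you reprove the classical ``fractional Tutte'' material in-line: (ii)$\Rightarrow$(iii)$\Rightarrow$(vii) by direct counting, (ii)$\Leftrightarrow$(iv)$\Leftrightarrow$(v) from half-integrality of the fractional perfect matching polytope together with the even-cycle perturbation argument, and the crux (vii)$\Rightarrow$(ii) via LP duality $\nu_f=\tau_f$, a half-integral optimal fractional vertex cover $z^*$, the identity $\tfrac{|V|}{2}-\nu_f=\tfrac12(|Z_0|-|Z_1|)$, and the observation that $Z_0$ is isolated in $G-Z_1$; I checked these steps and they are sound (including that $Z_0$ is independent with $N(Z_0)\subseteq Z_1$, and that $\nu_f\le\tfrac{|V|}{2}$ closes the argument). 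What the paper's approach buys is brevity, delegating the graph-theoretic equivalences to the literature and only verifying what is new; what yours buys is independence from those sources, at the cost of importing (or proving) two standard half-integrality facts --- for the fractional perfect matching polytope and for the fractional vertex cover LP --- which you should cite precisely or prove, since they carry the real weight of your (ii)$\Leftrightarrow$(iv) and (vii)$\Rightarrow$(ii) steps.
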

\begin{proof}
    The equivalence of~(iii) and (iv) has been shown by
    Tutte~\cite{tutte19531}.
    Proofs of equivalence with~(ii), (v) and (vii) can be found in the
    book by Scheinerman and Ullman~\cite{scheinerman2011fractional}.
    This leaves (i) and (vi).

    $(\text{iv}) \implies (\text{vi})$: Obviously, if~$G$ has a Q-factor
    then this subgraph is also a regular spanning subgraph with each component of degree at least~$1$.

    $(\text{vi}) \implies (\text{ii})$: Note that every $d$-regular
    graph with $d \ge 1$ has a fractional perfect matching by choosing~$
    y_e = \frac 1 d$ for every edge~$e$.
    Assume that~$G$ has a spanning subgraph where every component is regular of degree at least~$1$, then every component of the spanning subgraph has a
    fractional perfect matching.
    Since the edges and vertices in the components are disjoint,
    combining the fractional perfect matchings yield a fractional
    perfect matching in~$G$ since the subgraph is a spanning subgraph.

    $(\text{i}) \implies (\text{ii})$: Assume that~$p_U > 0$.
    Using the second part of Lemma~\ref{lem:easylife}, there exists~$(x, p) \in \reals_+^E \times \reals_+$ with
    \[
        \sum_{e \in \delta(v)} x_e = p \quad \forall v \in V,\qquad \sum_{e \in E} x_e = 1.
    \]
    Define now~$y_e \coloneqq \frac{|V|}{2} x_e^*$ for all~$e \in E$.
    Then, $y_e \ge 0$ and
    \[
        \sum_{e \in E} y_e = \frac{|V|}{2} \sum_{e\in E} x^*_e = \frac{|V|}{2},
    \]
    and
    \[
        \sum_{e \in \delta(v)} y_e = \frac{|V|}{2} p^*
    \]
    for all~$v \in V$.
    Summing these over all vertices, we obtain via double counting
    that~$2 \sum_{e \in E} y_e = \frac{|V|^2}{2} p^*$ and for this reason~$\frac{|
    V|^2}{4} p^* = \frac{|V|}{2}$.
    But then~$\sum_{e \in \delta(v)} y_e = \frac{|V|}{2} p^* = 1$, and
    thus~$y$ is a fractional perfect matching.

    $(\text{ii}) \implies (\text{i})$: this is analogous with the
    previous case, but with the transformation~$x_e \coloneqq
    \frac{2}{|V|} y^*_e$ for every edge~$e \in E$, where~$y^*$ is a
    fractional perfect matching.
\end{proof}

This implies the following, somewhat surprising, lower bound on $p_U$
when it is positive.
\begin{corollary}
  \label{cor:matchingVertex}
    If $p_U > 0$, then $p_U \ge \frac 2 3$.
\end{corollary}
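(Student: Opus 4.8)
The plan is to use the structural characterization of positive $p_U$ from Theorem~\ref{thm:v:pupositive} and then exhibit an explicit feasible solution of the primal LP~\eqref{eq:lp:pu} (for the set system $(V, V(M))$) whose objective value is at least $\tfrac23$; since $p_U$ is the optimum of that LP, this proves the claim. By the equivalence of~(i) and~(iv) in Theorem~\ref{thm:v:pupositive}, $p_U > 0$ implies that $G$ has a Q-factor, i.e.\ a spanning subgraph $F$ whose connected components $F_1,\dots,F_k$ are each either a copy of $K_2$ or a cycle on an odd number (at least three) of vertices.

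For each component $F_j$ on $n_j$ vertices, set $q_j \coloneqq 1$ if $F_j \cong K_2$ and $q_j \coloneqq \frac{n_j-1}{n_j}$ if $F_j$ is an odd cycle; in the latter case $n_j \ge 3$ forces $q_j \ge \tfrac23$, so $q_j \ge \tfrac23$ in all cases. Let $p^\ast \coloneqq \min_j q_j \ge \tfrac23$. The idea is to construct, inside each $F_j$, a probability distribution over matchings of $F_j$ under which every vertex of $F_j$ is covered with probability exactly $p^\ast$, and then to combine these independently across components. If $F_j$ is the single edge $e$, I would assign probability $p^\ast$ to the matching $\{e\}$ and $1-p^\ast$ to $\emptyset$. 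If $F_j$ is an odd cycle on vertices $v_1,\dots,v_{n_j}$, then for each $i$ the graph $F_j - v_i$ is a path on $n_j-1$ (even) vertices and hence has a unique perfect matching $M_i$, which is a matching of $F_j$ covering exactly $V(F_j)\setminus\{v_i\}$; the uniform distribution over $M_1,\dots,M_{n_j}$ covers every vertex of $F_j$ with probability $q_j$. Diluting it, I would use this uniform distribution with probability $p^\ast/q_j \le 1$ and the empty matching with probability $1-p^\ast/q_j$, so that again every vertex of $F_j$ is covered with probability exactly $p^\ast$.

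Finally, since $F$ is spanning, the vertex sets of $F_1,\dots,F_k$ partition $V$, so a union $m = m_1 \cup \dots \cup m_k$ of matchings, one $m_j$ from each $F_j$, is again a matching of $G$. Drawing each $m_j$ independently from the distribution above therefore induces a probability distribution over the matchings of $G$, hence a feasible $x$ for~\eqref{eq:lp:pu} on $(V, V(M))$. Each $v \in V$ lies in exactly one component $F_j$ and is covered by $m$ precisely when it is covered by $m_j$, which happens with probability $p^\ast$; thus $(x,p^\ast)$ is feasible, giving $p_U \ge p^\ast \ge \tfrac23$.

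I do not expect a real obstacle: the only points needing care are that choosing $p^\ast = \min_j q_j$ guarantees $p^\ast/q_j \le 1$ for every component, that $F$ being spanning is what lets every vertex inherit the coverage probability $p^\ast$ from its component, and the elementary observation that deleting one vertex from an odd cycle leaves an even path with a perfect matching. The mild surprise behind the corollary is exactly that the worst possible component — a triangle — already forces coverage at least $\tfrac23$, so there is no instance with $0 < p_U < \tfrac23$.
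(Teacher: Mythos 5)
Your proposal is correct and takes essentially the same route as the paper: use the equivalence of~(i) and~(iv) in Theorem~\ref{thm:v:pupositive} to obtain a Q-factor, build a distribution over matchings within each component ($K_2$ with probability~$1$, odd $n$-cycles via the $n$ near-perfect matchings each with weight~$\frac 1 n$), and combine across components. Your write-up is in fact slightly more careful than the paper's, since you make explicit the dilution to a common coverage probability~$p^\ast=\min_j q_j$ needed for the uniformity constraint in~\eqref{eq:lp:pu}, a step the paper's proof leaves implicit when it ``combines the probabilities for the separate components.''
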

\begin{proof}
    If~$p_U > 0$ then $G$ has a Q-factor, which is a collection of
    vertex-disjoint separate edges and odd cycles.
    Note that in an odd cycle on $n$~vertices we can achieve~$p_U = \frac{n-1}{n}$ by giving every maximum matching weight~$\frac 1 n$, which is
    smallest for $n = 3$.
    For separate edges, we have $p_U = 1$.
    Thus, combining the probabilities for the separate components, we
    are always able to achieve at least~$p_U \ge \frac 2 3$ for the whole graph.
\end{proof}

\subsubsection{Rawlsian Justice}
\label{subsec:rawlsfair}
In the previous section, we have seen that requiring uniform fairness may cause $p_U$ to become zero.
When~$p_U = 0$, we might hope that the more relaxed notion of Rawlsian justice still yields a desirable solution.
To this end, we relate~$p_R$ to known graph invariants.

In the case of fair matchings for vertices, the covering number of the corresponding hypergraph (Section~\ref{sec:hypergraph}) corresponds to the \emph{matching covering number} of~$G$, as introduced in~\cite{wang2014matching,ferhat2022many}.
We define its fractional analogue, the \emph{fractional matching covering number}~$\mcf G$, with the following LP\@.
\begin{subequations}
    \label{eq:ip:v:fracmc}
    \begin{alignat}{7}
        \mcf G
        =\quad &&\text{minimize}\quad && \sum_{m \in M} w_m &&& \\
        &&\text{subject to}\quad && \sum_{m \in M_v} w_m &\ge 1
        &&\qquad \forall {v \in V}, \\
        &&&& w_m &\ge 0 &&\qquad\forall {m \in M}.
    \end{alignat}
\end{subequations}
By Lemma~\ref{lemma:pr-frac-hypergraph-cov}, we have~$p_R = \frac{1}{\mcf G}$.
Using previous results for the integral matching covering number, we can derive a lower bound on~$p_R$.
\begin{lemma}
    For any regular graph~$G$, $p_R \ge \frac 2 3$.
    If~$G$ is non-regular, it holds that $p_R \ge \frac{1}{\maxdeg G - \mindeg G + 1}$.
\end{lemma}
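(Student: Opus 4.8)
The plan is to use the identity $p_R=\frac{1}{\mcf{G}}$ from Lemma~\ref{lemma:pr-frac-hypergraph-cov} and to bound $\mcf{G}$ from above, treating the regular and the non-regular case separately. Throughout I assume, as is implicit in the framework of Section~\ref{sec:model}, that $G$ has no isolated vertices, so that a regular $G$ is regular of some degree $d\ge 1$.

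For the regular case I would bypass $\mcf{G}$ and instead show $p_R\ge p_U\ge\frac23$. If $G$ is $d$-regular with $d\ge 1$, then $G$ is itself a spanning subgraph all of whose components are regular of degree $d\ge 1$, so statement~(vi) of Theorem~\ref{thm:v:pupositive} holds; hence $p_U>0$, and Corollary~\ref{cor:matchingVertex} gives $p_U\ge\frac23$. Since $p_U\le p_R$ always, $p_R\ge\frac23$. Equivalently, one can argue directly: a regular graph has a $Q$-factor by Theorem~\ref{thm:v:pupositive}(iv), and on each $Q$-factor component, a $K_2$ or a Hamiltonian graph on an odd number $n\ge 3$ of vertices, one weights matchings so that every vertex of that component is covered with total weight $1$, respectively $\frac{n}{n-1}\le\frac32$; combining the components yields $\mcf{G}\le\frac32$, i.e.\ $p_R\ge\frac23$.

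For the non-regular case the reduction is $p_R=\frac{1}{\mcf{G}}\ge\frac{1}{\mc{G}}$, since every integral matching cover is in particular a fractional one, so it suffices to show $\mc{G}\le\maxdeg{G}-\mindeg{G}+1$ for non-regular $G$ without isolated vertices; this is the result on the (integral) matching covering number from~\cite{wang2014matching,ferhat2022many} alluded to before the statement. For intuition, and to settle most cases directly, I would take a proper edge colouring of $G$ into matchings $C_1,\dots,C_k$ with $k=\chi'(G)\in\{\maxdeg{G},\maxdeg{G}+1\}$ (Vizing). Every vertex lies in exactly $\deg(v)\ge\mindeg{G}$ of these classes, hence misses at most $k-\mindeg{G}$ of them, so any $k-\mindeg{G}+1$ of the $C_i$ together cover $V$. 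When $G$ is Class~1 this already gives $\mc{G}\le\maxdeg{G}-\mindeg{G}+1$; moreover, putting weight $\frac{1}{\mindeg{G}}$ on each $C_i$ in the LP~\eqref{eq:ip:v:fracmc} yields $\mcf{G}\le\frac{\maxdeg{G}}{\mindeg{G}}\le\maxdeg{G}-\mindeg{G}+1$ at once (the last inequality is $(\maxdeg{G}-\mindeg{G})(\mindeg{G}-1)\ge 0$).

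The main obstacle is the Class~2 case, where a proper edge colouring needs $\maxdeg{G}+1$ classes and the counting above only yields $\mc{G}\le\maxdeg{G}-\mindeg{G}+2$, respectively $\mcf{G}\le\frac{\maxdeg{G}+1}{\mindeg{G}}$. When $\mindeg{G}\ge 2$ this crude fractional bound is still at most $\maxdeg{G}-\mindeg{G}+1$, because $\maxdeg{G}-\mindeg{G}\ge 1$ and $\mindeg{G}-1\ge 1$ force $(\maxdeg{G}-\mindeg{G})(\mindeg{G}-1)\ge 1$; so the genuinely delicate subcase is a Class~2 graph with a vertex of degree~$1$, where one invokes the sharper analysis of $\mc{G}$ in~\cite{wang2014matching,ferhat2022many} (heuristically, a pendant vertex makes $G$ locally sparse, which is precisely where matching covers are cheap). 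Granting $\mc{G}\le\maxdeg{G}-\mindeg{G}+1$, we conclude $p_R=\frac{1}{\mcf{G}}\ge\frac{1}{\mc{G}}\ge\frac{1}{\maxdeg{G}-\mindeg{G}+1}$, completing the non-regular case and the proof.
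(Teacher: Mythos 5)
Your proposal is correct and follows essentially the same route as the paper: the regular case via Theorem~\ref{thm:v:pupositive}(vi), Corollary~\ref{cor:matchingVertex} and $p_U \le p_R$, and the non-regular case via $p_R = \frac{1}{\mcf G} \ge \frac{1}{\mc G}$ combined with the bound $\mc G \le \max\{2, \maxdeg G - \mindeg G + 1\}$ of~\cite{wang2014matching}, which non-regularity ($\maxdeg G - \mindeg G \ge 1$) reduces to $\maxdeg G - \mindeg G + 1$. Your supplementary Vizing-colouring sketch is a pleasant partially elementary justification of that bound, but since it still defers to~\cite{wang2014matching} for Class~2 graphs with a degree-one vertex, it does not change the essential reliance on the same citation.
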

\begin{proof}
    If~$G$ is regular, then~$p_U > 0$ by~(vi) of Theorem~\ref{thm:v:pupositive} (the graph itself is a regular spanning subgraph).
    Then by Corollary~\ref{cor:matchingVertex}, we have~$p_R \ge p_U \ge \frac 2 3$.
    Suppose now that~$G$ is not regular.
    From~\cite{wang2014matching}, we have the following upper bound on
    the integral matching covering number
    \[
        \mc G \le \max\set{2, \maxdeg G - \mindeg G + 1}.
    \]
    Since we have~$\maxdeg G - \mindeg G \ge 1$, the upper bound simplifies to~$\mc G \le
    \maxdeg G - \mindeg G + 1$.
    This gives
    \[
        p_R = \frac{1}{\mcf G} \ge \frac{1}{\mc G}
        \ge \frac{1}{\maxdeg G - \mindeg G + 1}. \qedhere
    \]
\end{proof}
Note that this lower bound on~$p_R$ is tight, in the sense that we have
equality when~$G$ is regular or a star graph with at least three vertices.
For graphs that are ``almost regular'', in the sense that the maximum degree is close to the minimum degree, we can still achieve an acceptabele lower
bound, even for graphs where~$p_U$ is zero.

\section{Group Fairness} \label{sec:gf}
In the previous sections, we have considered fairness with respect to \emph{individual} elements in the ground set of the considered set system.
In this section, we formalize group fairness in our setting of set systems in Section~\ref{subsec:gfc}, and furthermore analyze the complexity of finding group-fair solutions for the fair matching for vertices problem in Section~\ref{subsec:gf-matching-v}.

\subsection{Group fairness constraints} \label{subsec:gfc}
Our problem input now not only consists of a set system~$(A, M)$, but also of a collection of \emph{groups}~$\mathcal G = \set{G_1, \dots, G_k}$, with $G_1, \dots, G_k \subseteq A$ subsets of the ground set~$A$.
We assume that the groups are pairwise disjoint;
when considering groups that arise from a single sensitive attribute, this is in many cases a natural assumption.
We impose group fairness constraints on the possible solutions, i.e., we require \emph{ex-post} group fairness, which ensures that the group fairness constraints hold in any solution we consider.
We thus restrict the set~$M$ to only include solutions that satisfy the group fairness constraints, $M_\text{gf} \subseteq M$.

Note that we considered \emph{ex-ante} individual fairness in Section~\ref{sec:if}, while now requiring \emph{ex-post} group fairness.
Although it is possible to also handle group fairness in an ex-ante setting, group fairness constraints are often imposed on a problem to ensure representation of protected groups.
Usually, these are hard conditions imposed on solutions, and in some cases required by regulations or law.
In these cases, ensuring ex-ante fairness is not sufficient, and the fairness constraints must be enforced ex-post.
Furthermore, it is possible to combine ex-post group fairness with ex-ante individual fairness in our framework in Section~\ref{sec:if}.
Indeed, instead of optimizing over the set system~$(A, M)$, we can instead take the system~$(A, M_\text{gf})$.
Note that, in general, the restricted system is not an independence system, even when~$(A, M)$ is.
In particular, when an absolute group fairness constraint specifies a lower bound (not trivially equal to zero) on the number of elements of the group, the independence system property does not hold.

We consider two types of group fairness constraints.
For every group~$G_i \in \mathcal G$, we define an \emph{absolute group fairness constraint} by defining a lower bound~$\ell_i \in \naturals$ and an upper bound~$u_i \in \naturals$ on the number of individuals from group~$G_i$ that should appear in a solution.
Then, the set~$M_\text{gf}^\text{abs} \subseteq M$ of solutions that satisfy these group fairness constraints is given by
\begin{equation}
    M_\text{gf}^\text{abs} = \{m \in M : \text{$\ell_i \le |m \cap G_i| \le u_i$ for all $i \in [k]$} \}.
\end{equation}

We also allow \emph{relative group fairness constraints}, which we define for a pair of groups~$G_i, G_j \in \mathcal G$ by introducing a number~$\alpha_{ij} \in \reals_+$ that defines the relative ratio between the cardinality of elements in the solution from group~$G_i$ and~$G_j$.
Relative group fairness constraints may not necessarily be present for all pairs of groups.
Therefore, we define for a set~$S \subseteq [k]^2$ of pairs of groups, the set~$M_\text{gf}^\text{rel}(S) \subseteq M$ of solutions that satisfy relative group fairness constraints by
\begin{equation}
    M_\text{gf}^\text{rel}(S) = \{m \in M : \text{$|m \cap G_i| \le \alpha_{ij} |m \cap G_j|$ for all $(i,j) \in S$} \}.
\end{equation}
The set of group-fair solutions is then given by~$M_\text{gf} = M_\text{gf}^\text{abs} \cap M_\text{gf}^\text{rel}(S)$, for~$S \in [k]^2$.

In the remainder, we analyze the complexity of determining group-fair solutions for the group-fair matching for edges and group-fair matching for vertices problems.
For fair matching for edges, notice that the restricted case of two groups~$G_1, G_2 \subseteq E$ and the relative group fairness constraint that for a matching~$m$, $|m \cap G_1| = |m \cap G_2|$, i.e., the number of matching edges from both groups must be equal, is equivalent with the well-known \textsc{Exact-Matching} problem~\cite{exactmatching}.
The computational complexity of \textsc{Exact-Matching} is a long-standing open problem.

\subsection{Group-Fair Matching for Vertices} \label{subsec:gf-matching-v}
In this section, we show that finding group-fair matchings for vertices can be done in polynomial time, for any kind of absolute and/or relative group fairness constraints we consider.
We show this by first showing that a related problem can be solved in polynomial time.

Let~$G = (V, E)$ be a given graph, where the vertices are weighted by~$w\colon V \to \reals$ and colored with colors~$[k]$.
The coloring need not be proper, i.e., two vertices of the same color may be adjacent.
Let~$r_i \in \naturals$ for every~$i \in [k]$.
The \textsc{Exact-Budgeted Matching} problem asks for a matching covering exactly~$r_i$ vertices of every color~$i \in [k]$, of maximum weight of the vertices covered by the matching.

\begin{theorem}
    The \textsc{Exact-Budgeted Matching} problem can be solved in polynomial time.
\end{theorem}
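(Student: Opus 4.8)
The plan is to reduce \textsc{Exact-Budgeted Matching} to a polynomial number of instances of the \emph{exact perfect matching} type problem, or more directly to a weighted matching problem in an auxiliary graph, exploiting the fact that a ``matching covering exactly $r_i$ vertices of color $i$'' is a constrained combinatorial object whose feasible region is the intersection of the matching polytope with a bounded number of parity/cardinality constraints. Since the number of colors $k$ is part of the input, I would first observe that the total number of covered vertices is $\sum_{i \in [k]} r_i \le |V|$, so the profile $(r_1,\dots,r_k)$ is fixed once and for all; there is no enumeration over profiles. The objective ``maximum weight of vertices covered'' can be rewritten as an edge-weighted objective: assign to each edge $e = \{u,v\}$ a weight, but because a vertex's weight should be counted once when covered, I instead work with the vertex-covered formulation directly and pass to an auxiliary graph.

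The key construction I would use: build an auxiliary graph $H$ from $G$ by adding, for each vertex $v \in V$, a pendant edge to a new ``dummy'' vertex $v'$, so that every vertex of $G$ can be ``matched to its dummy'' at zero cost, turning the problem into a \emph{perfect matching} problem on $V \cup \{v' : v \in V\}$ where covering $v$ in $G$ corresponds to \emph{not} using the pendant edge $\{v,v'\}$. Give edge $\{u,v\} \in E$ weight $w(u) + w(v)$ and every pendant edge weight $0$; then the weight of a perfect matching in $H$ equals the total weight of vertices of $G$ covered by the induced matching. The color-budget constraint ``exactly $r_i$ vertices of color $i$ are covered'' becomes ``exactly $|V_i| - r_i$ pendant edges incident to color-$i$ dummies are used,'' which is a cardinality constraint on a prescribed subset of edges. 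So the problem is now: find a maximum-weight perfect matching in $H$ using exactly a prescribed number $t_i := |V_i| - r_i$ of edges from each of $k$ disjoint, explicitly given edge sets $F_i$ (the pendant edges of color $i$).

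This last problem — maximum-weight perfect matching subject to using exactly $t_i$ edges from each of $k$ disjoint edge classes — is solvable in polynomial time for fixed $k$ by the standard technique of Lagrangian/weight-perturbation combined with the fact that the cardinality function over a matroid-like family is well-behaved; more concretely, I would invoke the exact/budgeted matching machinery of Barvinok-type or the $k$-dimensional generalization: replace each class-$F_i$ edge weight $w_e$ by $w_e + \sum_i \lambda_i [e \in F_i]$ and argue, as in the exact-matching literature for a bounded number of constraints, that one can recover a feasible matching with the exact profile by examining polynomially many matchings obtained as one varies the $\lambda_i$ over a polynomial-size grid of breakpoints, using that the optimal matching weight as a function of $\lambda$ is piecewise linear with polynomially many pieces when $k$ is fixed. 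Alternatively, and more cleanly, I would cite the result that weighted matching with a constant number of linear side constraints is polynomial (via the ellipsoid method over the matching polytope intersected with the $O(k)$ hyperplanes, whose separation is polynomial since the matching polytope has a polynomial separation oracle), and then round to an integral solution using the fact that a vertex of that polytope has at most $O(k)$ fractional components, which can be handled combinatorially.

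The main obstacle I expect is the exactness of the cardinality constraints: it is easy to get ``at least'' or ``at most'' budgets (those only shrink the matching polytope by box constraints on auxiliary edges, preserving integrality of the LP), but forcing \emph{equality} of $\sum_{e \in F_i} x_e = t_i$ destroys the integrality of the LP relaxation in general, so the crux is the rounding / recovery step. I would resolve it by noting that $k$ is fixed (indeed, bounded by $|V|$ so the grid arguments stay polynomial only if we are careful — here I would lean on the polyhedral argument so the dependence on $k$ is polynomial, not exponential): intersecting the integral matching polytope with $k$ equalities yields a face whose vertices have bounded fractional support, and a vertex with fractional support must contain an alternating cycle or path in the support; we can cancel fractionality along such structures while preserving the $k$ budgets by a parity/flow argument, terminating at an integral optimum. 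Wrapping this up, the reduction back to \textsc{Exact-Budgeted Matching} is immediate, giving a polynomial-time algorithm overall.
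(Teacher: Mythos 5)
There is a genuine gap, and it comes in two parts. First, your auxiliary construction is internally inconsistent: if every $v \in V$ gets its own private dummy $v'$ of degree one, then in \emph{any} perfect matching of $H$ the edge $\{v,v'\}$ is forced (it is the only way to cover $v'$), so the unique perfect matching of $H$ consists of all pendant edges and encodes only the empty matching of $G$; ``covering $v$ corresponds to not using $\{v,v'\}$'' contradicts perfection. Second, and more fundamentally, even after repairing the gadget you reduce to ``maximum-weight perfect matching using exactly $t_i$ edges from each of $k$ prescribed edge classes.'' That problem is a generalization of \textsc{Exact-Matching}, whose deterministic complexity is a long-standing open question (the paper itself makes this point when discussing group-fair matching for edges), so no known polynomial-time machinery of the kind you invoke settles it. Your proposed repairs do not close the hole: Lagrangian perturbation of an equality constraint over the integral matchings can have a duality gap, and the matchings optimal at breakpoints need not hit the exact budgets; and the rounding step (``cancel fractionality along alternating cycles/paths while preserving the $k$ equality budgets'') is precisely what fails --- an alternating-cycle exchange changes the number of class edges used, and if such a parity-preserving cancellation always worked it would resolve \textsc{Exact-Matching}. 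You also waver on whether $k$ is constant; the theorem as stated needs no such assumption.

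The missing idea is to encode the exact budgets \emph{structurally} rather than as side constraints. The paper adds, for each color $i$, exactly $n_i - r_i$ \emph{shared} dummy vertices, each adjacent to all vertices of color $i$ (with the added edges given weight $0$ and each original edge $\{u,v\}$ weight $w(u)+w(v)$). In a perfect matching of this auxiliary graph every dummy must be matched to a distinct color-$i$ vertex, so exactly $n_i - r_i$ color-$i$ vertices go to dummies and the remaining $r_i$ are covered by original edges --- the exact-coverage constraint is enforced automatically, with no cardinality side constraints and no rounding. A single maximum-weight perfect matching computation on a graph of polynomial size then solves \textsc{Exact-Budgeted Matching}, in polynomial time even when the number of colors $k$ is part of the input.
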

\begin{proof}
    Let~$G = (V, E)$, with vertices colored with colors~$[k]$ and~$r_i \in \naturals$ for every color~$i \in [k]$ be an instance of the \textsc{Exact-Budgeted Matching} problem.
    Let~$w\colon V \to \reals$.

    Let~$G_i$ denote the set of vertices colored with~$i$, and let~$n_i = |G_i|$, for every~$i \in [k]$.
    We construct an auxiliary graph~$G'=(V', E')$ in the following way.
    The set of vertices~$V'$ consists of all vertices~$V$ and~$n_i - r_i$ additional vertices~$V^i$ for every color~$i \in [k]$.
    Thus, $V' = V \cup \left( \bigcup_{i \in [k]} V^i \right)$.
    The set of edges~$E'$ consists of all edges~$E$ and the additional edges~$\set{ \set{u, v} : u \in V^i,\ v \in G_i }$ for every~$i \in [k]$.
    We furthermore define weights~$w'\colon E \to \reals$ on the edges, as~$w'(\set{u, v}) = w(u) + w(v)$ if~$\set{u, v} \in E$, and~$w'(\set{u, v}) = 0$ otherwise.
    See Figure~\ref{fig:auxgraph} for an example construction.

    We now argue that every solution to \textsc{Exact-Budgeted Matching} corresponds to a maximum-weight perfect matching in~$G'$ with weights~$w'$.
    Let~$m$ be a solution of \textsc{Exact-Budgeted Matching}.
    Construct a matching~$m'$ in~$G'$ by taking~$m$ and including $n_i - r_i$ matching edges between~$G_i$ and~$V^i$, for every color~$i \in [k]$.
    Note that this is always possible, since~$m$ covers exactly~$r_i$ vertices of color~$i$.
    Note that~$m'$ is a maximum-weight perfect matching in~$G'$.

    For the other direction, let~$m'$ be a maximum-weight perfect matching in~$G'$.
    Let now~$m = m' \cap E$, which exactly is solution to~\textsc{Exact-Budgeted Matching}.

    The size of~$G'$ is polynomially bounded in the size of the original instance.
    Since finding a perfect matching in~$G'$ can be done in polynomial time, we can solve \textsc{Exact-Budgeted Matching} in polynomial time.
\end{proof}

\begin{figure}
    \centering
    \def\ebmfigscale{.65}
    \begin{tikzpicture}[scale=\ebmfigscale]
    \definecolor{baseorange}{HTML}{D55E00}
    \definecolor{baseblue}{HTML}{0072B2}
    \definecolor{basegreen}{HTML}{009E73}
    \definecolor{textcolor}{HTML}{595959}
    \definecolor{tuescarlet}{RGB}{200,24,24}
    \def\radius{1.75mm}
    \def\lw{.4mm}

    \coordinate (l1) at (1, 1);
    \coordinate (l2) at (1, 2);
    \coordinate (l3) at (1, 3);

    \coordinate (t1) at (2, 4);
    \coordinate (t2) at (3, 4);
    \coordinate (t3) at (4, 4);

    \coordinate (r1) at (5, 2);
    \coordinate (r2) at (5, 3);

    \coordinate (nl1) at (0, 1.5);
    \coordinate (nl2) at (0, 2.5);

    \coordinate (nt1) at (3, 5);

    \coordinate (nr1) at (6, 2.5);

    \draw[line width=\lw,draw=textcolor] (t1) -- (r1);
    \draw[line width=\lw,draw=textcolor] (t1) -- (l2);
    \draw[line width=\lw,draw=textcolor] (l1) -- (r2);
    \draw[line width=\lw,draw=textcolor] (l3) -- (r1);
    \draw[line width=\lw,draw=textcolor] (l2) -- (r2);

    \draw[line width=\lw,draw=textcolor] (t3) -- (r1);
    \draw[line width=\lw,draw=textcolor] (t2) -- (l2);

    \foreach \i in {1, ..., 3} {
        \foreach \j in {1, ..., 2} {
            \draw[line width=\lw,draw=textcolor!50] (l\i) -- (nl\j);
        }
    }

    \foreach \i in {1, ..., 3} {
        \foreach \j in {1, ..., 1} {
            \draw[line width=\lw,draw=textcolor!50] (t\i) -- (nt\j);
        }
    }

    \foreach \i in {1, ..., 2} {
        \foreach \j in {1, ..., 1} {
            \draw[line width=\lw,draw=textcolor!50] (r\i) -- (nr\j);
        }
    }

    \fill[fill=baseblue] (l1) circle[radius=\radius];
    \fill[fill=baseblue] (l2) circle[radius=\radius];
    \fill[fill=baseblue] (l3) circle[radius=\radius];

    \fill[fill=baseorange] (t1) circle[radius=\radius];
    \fill[fill=baseorange] (t2) circle[radius=\radius];
    \fill[fill=baseorange] (t3) circle[radius=\radius];

    \fill[fill=basegreen] (r1) circle[radius=\radius];
    \fill[fill=basegreen] (r2) circle[radius=\radius];

    \fill[fill=baseblue!50] (nl1) circle[radius=\radius];
    \fill[fill=baseblue!50] (nl2) circle[radius=\radius];

    \fill[fill=baseorange!50] (nt1) circle[radius=\radius];

    \fill[fill=basegreen!50] (nr1) circle[radius=\radius];

\end{tikzpicture}
    \hspace{1cm}
    \begin{tikzpicture}[scale=\ebmfigscale]
    \definecolor{baseorange}{HTML}{D55E00}
    \definecolor{baseblue}{HTML}{0072B2}
    \definecolor{basegreen}{HTML}{009E73}
    \definecolor{textcolor}{HTML}{595959}
    \definecolor{tuescarlet}{RGB}{200,24,24}
    \def\radius{1.75mm}
    \def\lw{.4mm}

    \coordinate (l1) at (1, 1);
    \coordinate (l2) at (1, 2);
    \coordinate (l3) at (1, 3);

    \coordinate (t1) at (2, 4);
    \coordinate (t2) at (3, 4);
    \coordinate (t3) at (4, 4);

    \coordinate (r1) at (5, 2);
    \coordinate (r2) at (5, 3);

    \coordinate (nl1) at (0, 1.5);
    \coordinate (nl2) at (0, 2.5);

    \coordinate (nt1) at (3, 5);

    \coordinate (nr1) at (6, 2.5);

    \draw[line width=\lw,draw=textcolor!25] (t1) -- (r1);
    \draw[line width=\lw,draw=textcolor!25] (t1) -- (l2);
    \draw[line width=\lw,draw=textcolor!25] (l1) -- (r2);
    \draw[line width=\lw,draw=textcolor!25] (l3) -- (r1);
    \draw[line width=\lw,draw=textcolor!25] (l2) -- (r2);

    \draw[line width=\lw,draw=textcolor!25] (t3) -- (r1);
    \draw[line width=\lw,draw=textcolor!25] (t2) -- (l2);

    \draw[line width=\lw,draw=tuescarlet] (t3) -- (r1);
    \draw[line width=\lw,draw=tuescarlet] (t2) -- (l2);

    \foreach \i in {1, ..., 3} {
        \foreach \j in {1, ..., 2} {
            \draw[line width=\lw,draw=textcolor!25] (l\i) -- (nl\j);
        }
    }

    \foreach \i in {1, ..., 3} {
        \foreach \j in {1, ..., 1} {
            \draw[line width=\lw,draw=textcolor!25] (t\i) -- (nt\j);
        }
    }

    \foreach \i in {1, ..., 2} {
        \foreach \j in {1, ..., 1} {
            \draw[line width=\lw,draw=textcolor!25] (r\i) -- (nr\j);
        }
    }

    \draw[line width=\lw,draw=tuescarlet] (l3) -- (nl2);
    \draw[line width=\lw,draw=tuescarlet] (l1) -- (nl1);
    \draw[line width=\lw,draw=tuescarlet] (t1) -- (nt1);
    \draw[line width=\lw,draw=tuescarlet] (r2) -- (nr1);

    \fill[fill=baseblue] (l1) circle[radius=\radius];
    \fill[fill=baseblue] (l2) circle[radius=\radius];
    \fill[fill=baseblue] (l3) circle[radius=\radius];

    \fill[fill=baseorange] (t1) circle[radius=\radius];
    \fill[fill=baseorange] (t2) circle[radius=\radius];
    \fill[fill=baseorange] (t3) circle[radius=\radius];

    \fill[fill=basegreen] (r1) circle[radius=\radius];
    \fill[fill=basegreen] (r2) circle[radius=\radius];

    \fill[fill=baseblue!50] (nl1) circle[radius=\radius];
    \fill[fill=baseblue!50] (nl2) circle[radius=\radius];

    \fill[fill=baseorange!50] (nt1) circle[radius=\radius];

    \fill[fill=basegreen!50] (nr1) circle[radius=\radius];

\end{tikzpicture}
    \caption{An auxiliary graph for \textsc{Exact-Budgeted Matching}, with the three colors blue (left), orange (top), and green (right) and color requirements 1, 2, and 1, respectively. A perfect matching exists in this graph (displayed on the right), and corresponds to a solution to \textsc{Exact-Budgeted Matching}.}
    \label{fig:auxgraph}
\end{figure}
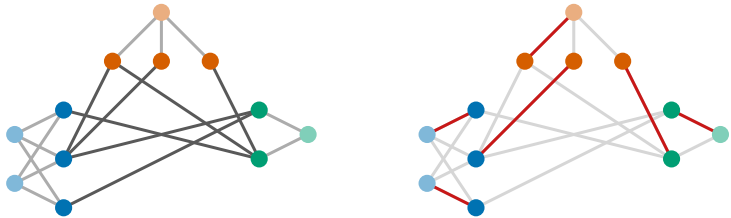

We can use the \textsc{Exact-Budgeted Matching} problem to find group-fair matchings for vertices in the following way.
Any absolute or relative group fairness constraint can alternatively be represented by a set of exact requirements $(r_1, \dots, r_k) \in \naturals^k$ on the number of individuals per group that are covered by the matching, by enumerating all possible values.
Since the requirements~$r_i$ for a group~$i$ are bounded by the number of vertices~$n$, the set of exact requirements is bounded by~$n^k$.
Every exact requirement corresponds to an instance of \textsc{Exact-Budgeted Matching}, where the groups are the colors of the vertices in the graph, which we can solve in polynomial time.
For a constant~$k$ number of groups, we can thus optimize over group-fair matchings in polynomial time, as summarized by the following corollary.
The assumption that the number of groups~$k$ is constant is often natural.
In many cases, groups are defined by only a single or small number of predetermined attributes, such as blood type, in the example of kidney exchange programs.

\begin{corollary}
    Let~$G = (V, E)$ be a graph with a constant number of pairwise disjoint groups~$G_1, \dots, G_k \subseteq V$ of vertices and vertex weights~$w\colon V \to \reals$.
    Let~$M_\mathrm{gf}$ denote the set of matchings restricted to absolute and/or relative group fairness constraints.
    Then finding a matching in $M_\mathrm{gf}$ with maximum total vertex-weight can be done in polynomial time.
\end{corollary}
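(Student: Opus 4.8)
The plan is to reduce the problem to polynomially many instances of \textsc{Exact-Budgeted Matching}, one for each possible ``group profile'' of a matching, and to invoke the theorem just proved that each such instance is solvable in polynomial time. For a matching~$m$ in~$G$, let~$V(m)$ be the set of covered vertices and define its profile~$\rho(m) = (|V(m) \cap G_1|, \dots, |V(m) \cap G_k|) \in \naturals^k$. Writing~$n = |V|$, each coordinate of a profile lies in~$\{0, 1, \dots, n\}$ because the~$G_i$ are pairwise disjoint subsets of~$V$, so there are at most~$(n+1)^k$ distinct profiles; for constant~$k$ this is polynomial in the size of~$G$.

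The key observation is that membership in~$M_{\mathrm{gf}}$ depends on~$m$ only through~$\rho(m)$: the absolute constraint~$\ell_i \le |m \cap G_i| \le u_i$ becomes~$\ell_i \le \rho(m)_i \le u_i$, and the relative constraint~$|m \cap G_i| \le \alpha_{ij}|m \cap G_j|$ becomes~$\rho(m)_i \le \alpha_{ij}\rho(m)_j$. Hence, given a candidate profile~$r \in \{0, \dots, n\}^k$ one checks in time polynomial in~$k$ whether every matching with profile~$r$ lies in~$M_{\mathrm{gf}}$; call such an~$r$ \emph{admissible}. Then
\[
    \max_{m \in M_{\mathrm{gf}}} \sum_{v \in V(m)} w(v)
    \;=\; \max_{r \text{ admissible}} \;\; \max\bigl\{ {\textstyle\sum_{v \in V(m)}} w(v) : m \text{ a matching with } \rho(m) = r \bigr\},
\]
and the inner maximum is exactly the optimal value of an \textsc{Exact-Budgeted Matching} instance on~$G$ with weights~$w$, where each vertex is colored by the (unique) group containing it, an auxiliary color~$k+1$ is given to the vertices that lie in no group, and the color requirements are~$r_1, \dots, r_k$ together with an extra requirement~$r_{k+1}$ for the auxiliary color.

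Since~$r_{k+1}$ also ranges over~$\{0, \dots, n\}$, the algorithm is: for every admissible~$r$ and every~$r_{k+1} \in \{0, \dots, n\}$ solve the corresponding \textsc{Exact-Budgeted Matching} instance by the theorem above, skipping infeasible ones (if all are skipped, then~$M_{\mathrm{gf}} = \emptyset$), and return the best matching found. This amounts to~$\bigO((n+1)^{k+1})$ polynomial-time computations, hence is polynomial overall for constant~$k$; since the reduction underlying the \textsc{Exact-Budgeted Matching} theorem produces an explicit matching, we obtain an actual optimal matching in~$M_{\mathrm{gf}}$ and not merely its value. I do not expect a real obstacle: the only points that need care are verifying that feasibility and the objective genuinely factor through~$\rho(m)$, handling the vertices outside all groups via the auxiliary color, and noting that polynomiality relies on~$k$ being constant, since otherwise~$(n+1)^{k+1}$ is no longer polynomial in the input size.
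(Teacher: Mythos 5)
Your proposal is correct and follows essentially the same route as the paper: enumerate all exact per-group coverage requirements (polynomially many for constant~$k$, since group-fairness feasibility depends only on this profile), and solve one \textsc{Exact-Budgeted Matching} instance per admissible profile using the preceding theorem. Your explicit treatment of vertices lying in no group via an auxiliary color with an enumerated requirement~$r_{k+1}$ is a detail the paper leaves implicit, but it does not change the argument.
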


Furthermore, this means that combining group fairness and individual fairness with our framework in Section~\ref{sec:model}, we can find~$p_U$ and~$p_R$ for ex-post group-fair matchings in polynomial time.

\section{Conclusion} \label{sec:conclusion}
In this work, we analyzed in-depth two individual fairness measures, uniform fairness and Rawlsian justice, in the setting of graph-theoretical optimization problems.
Linking these fairness measures to fractional graph and hypergraph theory, yielding new insights for the general class of problems referred to as independence systems, and for specific problems such as fair matching and fair independent set.
We have furthermore analyzed how to combine ex-ante individual fairness with ex-post group fairness, and analyzed the complexity in more detail for fair matching.
In further directions of research, we are interested how our theoretical approach can be extended to more advanced individual fairness measures, such as minimizing the deviation from the average probability of being included in a matching or other measures that cannot be represented by a linear model.
Additionally, we are interested in exploring the trade-off between fairness and the social optimum from a theoretical standpoint.
In particular, characterizing instances where a certain value for~$p_R$ and~$p_U$ can be guaranteed while restricting to only maximum-size sets in the set system, and classifying the differences between the Pareto fronts for uniform fairness and Rawlsian justice from a theoretical perspective.

\section*{Acknowledgements}
This research is supported by NWO Gravitation Project NETWORKS, Grant Number 024.002.003.

\bibliographystyle{elsarticle-num}
\bibliography{references}

\begin{thebibliography}{10}
\expandafter\ifx\csname url\endcsname\relax
  \def\url#1{\texttt{#1}}\fi
\expandafter\ifx\csname urlprefix\endcsname\relax\def\urlprefix{URL }\fi
\expandafter\ifx\csname href\endcsname\relax
  \def\href#1#2{#2} \def\path#1{#1}\fi

\bibitem{starnaud2023adaptation}
G.~Farnadi, W.~St-Arnaud, B.~Babaki, M.~Carvalho, Individual fairness in kidney
  exchange programs, Proceedings of the AAAI Conference on Artificial
  Intelligence 35~(13) (2021) 11496--11505.
\newblock \href {https://doi.org/10.1609/aaai.v35i13.17369}
  {\path{doi:10.1609/aaai.v35i13.17369}}.

\bibitem{biro2019building}
P.~Bir{\'o}, B.~Haase-Kromwijk, T.~Andersson, E.~I. {\'A}sgeirsson,
  T.~Baltesov{\'a}, I.~Boletis, C.~Bolotinha, G.~Bond, G.~B{\"o}hmig,
  L.~Burnapp, et~al., Building kidney exchange programmes in europe—an
  overview of exchange practice and activities, Transplantation 103~(7) (2019)
  1514.
\newblock \href {https://doi.org/10.1097/TP.0000000000002432}
  {\path{doi:10.1097/TP.0000000000002432}}.

\bibitem{flanigan2021fair}
B.~Flanigan, P.~G{\"o}lz, A.~Gupta, B.~Hennig, A.~D. Procaccia, Fair algorithms
  for selecting citizens' assemblies, Nature 596 (2021) 548--552.
\newblock \href {https://doi.org/10.1038/s41586-021-03788-6}
  {\path{doi:10.1038/s41586-021-03788-6}}.

\bibitem{rawls1971theory}
J.~Rawls, A Theory of Justice, Belknap Press, 1971.

\bibitem{liu2009clique}
E.~Liu, Q.~Zhang, K.~Leung, Clique-based utility maximization in wireless mesh
  networks-algorithm, simulation, and mathematical analysis, in: 2009 6th IEEE
  Annual Communications Society Conference on Sensor, Mesh and Ad Hoc
  Communications and Networks Workshops, IEEE, Rome, Italy, 2009, pp. 1--6.
\newblock \href {https://doi.org/10.1109/SAHCNW.2009.5172923}
  {\path{doi:10.1109/SAHCNW.2009.5172923}}.

\bibitem{fineman2014fair}
J.~T. Fineman, C.~Newport, M.~Sherr, T.~Wang, Fair maximal independent sets,
  in: 2014 IEEE 28th International Parallel and Distributed Processing
  Symposium, IEEE, Phoenix, AZ, USA, 2014, pp. 712--721.
\newblock \href {https://doi.org/10.1109/IPDPS.2014.79}
  {\path{doi:10.1109/IPDPS.2014.79}}.

\bibitem{huaizhou2013fairness}
H.~Shi, R.~V. Prasad, E.~Onur, I.~Niemegeers, Fairness in wireless networks:
  Issues, measures and challenges, IEEE Communications Surveys \& Tutorials
  16~(1) (2014) 5--24.
\newblock \href {https://doi.org/10.1109/SURV.2013.050113.00015}
  {\path{doi:10.1109/SURV.2013.050113.00015}}.

\bibitem{cramtonetalbook}
P.~Cramton, Y.~Shoham, R.~Steinberg, Combinatorial Auctions, The MIT Press,
  Boston, MA, USA, 2010.

\bibitem{goossens2014auctions}
D.~R. Goossens, S.~Onderstal, J.~Pijnacker, F.~C.~R. Spieksma, Solids: a
  combinatorial auction for real estate, Interfaces 44 (2014) 351--363.

\bibitem{Dwork2012}
C.~Dwork, M.~Hardt, T.~Pitassi, O.~Reingold, R.~Zemel, Fairness through
  awareness, in: Proceedings of the 3rd {Innovations} in {Theoretical}
  {Computer} {Science} {Conference}, {ITCS} '12, Association for Computing
  Machinery, New York, NY, USA, 2012, pp. 214--226.
\newblock \href {https://doi.org/10.1145/2090236.2090255}
  {\path{doi:10.1145/2090236.2090255}}.

\bibitem{GarciaSoriano2020a}
D.~García-Soriano, F.~Bonchi, Fair-by-design matching, Data Mining and
  Knowledge Discovery 34~(5) (2020) 1291--1335.
\newblock \href {https://doi.org/10.1007/s10618-020-00675-y}
  {\path{doi:10.1007/s10618-020-00675-y}}.

\bibitem{GarciaSoriano2021}
D.~García-Soriano, F.~Bonchi, Maxmin-{Fair} {Ranking}: {Individual} {Fairness}
  under {Group}-{Fairness} {Constraints}, in: Proceedings of the 27th {ACM}
  {SIGKDD} {Conference} on {Knowledge} {Discovery} \& {Data} {Mining}, {KDD}
  '21, Association for Computing Machinery, New York, NY, USA, 2021, pp.
  436--446.
\newblock \href {https://doi.org/10.1145/3447548.3467349}
  {\path{doi:10.1145/3447548.3467349}}.

\bibitem{Esmaeili2023}
S.~A. Esmaeili, S.~Duppala, V.~Nanda, A.~Srinivasan, J.~P. Dickerson, Rawlsian
  fairness in online bipartite matching: Two-sided, group, and individual, in:
  Proceedings of the 21st International Conference on Autonomous Agents and
  Multiagent Systems, AAMAS '22, International Foundation for Autonomous Agents
  and Multiagent Systems, Richland, SC, 2022, p. 1583–1585.
\newblock \href {https://doi.org/10.5555/3535850.3536042}
  {\path{doi:10.5555/3535850.3536042}}.

\bibitem{Flanigan2021}
B.~Flanigan, G.~Kehne, A.~D. Procaccia, Fair {Sortition} {Made} {Transparent},
  in: Advances in {Neural} {Information} {Processing} {Systems}, Vol.~34,
  Curran Associates, Inc., 2021, pp. 25720--25731.

\bibitem{Demeulemeester2023}
T.~Demeulemeester, D.~Goossens, B.~Hermans, R.~Leus, Fair integer programming
  under dichotomous preferencesArXiv:2306.13383 (Jun. 2023).
\newblock \href {https://doi.org/10.48550/arXiv.2306.13383}
  {\path{doi:10.48550/arXiv.2306.13383}}.

\bibitem{Calders2010}
T.~Calders, S.~Verwer, Three naive {Bayes} approaches for discrimination-free
  classification, Data Mining and Knowledge Discovery 21~(2) (2010) 277--292.
\newblock \href {https://doi.org/10.1007/s10618-010-0190-x}
  {\path{doi:10.1007/s10618-010-0190-x}}.

\bibitem{moritz2016}
M.~Hardt, E.~Price, N.~Srebro, Equality of opportunity in supervised learning,
  in: Proceedings of the 30th International Conference on Neural Information
  Processing Systems, NIPS'16, Curran Associates Inc., Red Hook, NY, USA, 2016,
  p. 3323–3331.
\newblock \href {https://doi.org/10.5555/3157382.3157469}
  {\path{doi:10.5555/3157382.3157469}}.

\bibitem{Mehrabi2021}
N.~Mehrabi, F.~Morstatter, N.~Saxena, K.~Lerman, A.~Galstyan, A {Survey} on
  {Bias} and {Fairness} in {Machine} {Learning}, ACM Computing Surveys 54~(6)
  (2021) 115:1--115:35.
\newblock \href {https://doi.org/10.1145/3457607} {\path{doi:10.1145/3457607}}.

\bibitem{Pessach2022}
D.~Pessach, E.~Shmueli, A {Review} on {Fairness} in {Machine} {Learning}, ACM
  Computing Surveys 55~(3) (2022) 51:1--51:44.
\newblock \href {https://doi.org/10.1145/3494672} {\path{doi:10.1145/3494672}}.

\bibitem{chierichetti19a}
F.~Chierichetti, R.~Kumar, S.~Lattanzi, S.~Vassilvtiskii, Matroids, matchings,
  and fairness, in: K.~Chaudhuri, M.~Sugiyama (Eds.), Proceedings of the
  Twenty-Second International Conference on Artificial Intelligence and
  Statistics, Vol.~89 of Proceedings of Machine Learning Research, PMLR, 2019,
  pp. 2212--2220.

\bibitem{BENTERT2022149}
M.~Bentert, L.~Kellerhals, R.~Niedermeier, The structural complexity landscape
  of finding balance-fair shortest paths, Theoretical Computer Science 933
  (2022) 149--162.
\newblock \href {https://doi.org/10.1016/j.tcs.2022.08.023}
  {\path{doi:10.1016/j.tcs.2022.08.023}}.

\bibitem{faircovhit}
S.~Bandyapadhyay, A.~Banik, S.~Bhore, On fair covering and hitting problems,
  in: Graph-Theoretic Concepts in Computer Science: 47th International
  Workshop, WG 2021, Warsaw, Poland, June 23–25, 2021, Revised Selected
  Papers, Springer-Verlag, Berlin, Heidelberg, 2021, p. 39–51.
\newblock \href {https://doi.org/10.1007/978-3-030-86838-3_4}
  {\path{doi:10.1007/978-3-030-86838-3_4}}.

\bibitem{scheinerman2011fractional}
E.~R. Scheinerman, D.~H. Ullman, Fractional graph theory: a rational approach
  to the theory of graphs, Wiley-Inerscience Series in Discrete Mathematics and
  Optimization, Wiley, New York, NY, USA, 1997.

\bibitem{furedi1988matchings}
Z.~F{\"u}redi, Matchings and covers in hypergraphs, Graphs and Combinatorics
  4~(1) (1988) 115--206.
\newblock \href {https://doi.org/10.1007/BF01864160}
  {\path{doi:10.1007/BF01864160}}.

\bibitem{dantzig1951maximization}
G.~B. Dantzig, Maximization of a linear function of variables subject to linear
  inequalities, Activity analysis of production and allocation 13 (1951)
  339--347.

\bibitem{khachiyan1979polynomial}
L.~G. Khachiyan, A polynomial algorithm in linear programming, in: Soviet
  Mathematics Doklady, Vol.~20, 1979, pp. 191--194.

\bibitem{karmarkar1984new}
N.~Karmarkar, A new polynomial-time algorithm for linear programming, in:
  Proceedings of the Sixteenth Annual ACM Symposium on Theory of Computing,
  STOC '84, Association for Computing Machinery, New York, NY, USA, 1984, p.
  302–311.
\newblock \href {https://doi.org/10.1145/800057.808695}
  {\path{doi:10.1145/800057.808695}}.

\bibitem{grotschel1981ellipsoid}
M.~Gr{\"o}tschel, L.~Lov{\'a}sz, A.~Schrijver, The ellipsoid method and its
  consequences in combinatorial optimization, Combinatorica 1 (1981) 169--197.
\newblock \href {https://doi.org/10.1007/BF02579273}
  {\path{doi:10.1007/BF02579273}}.

\bibitem{vizing1964estimate}
V.~G. Vizing, On an estimate of the chromatic class of a p-graph, Diskret.
  Analizz 3 (1964) 25--30.

\bibitem{tutte19531}
W.~T. Tutte, The 1-factors of oriented graphs, Proceedings of the American
  Mathematical Society 4~(6) (1953) 922--931.
\newblock \href {https://doi.org/10.2307/2031831} {\path{doi:10.2307/2031831}}.

\bibitem{wang2014matching}
X.~Wang, X.~Song, J.~Yuan, On matching cover of graphs, Mathematical
  Programming 147~(1-2) (2014) 499--518.
\newblock \href {https://doi.org/https://doi.org/10.1007/s10107-013-0731-3}
  {\path{doi:https://doi.org/10.1007/s10107-013-0731-3}}.

\bibitem{ferhat2022many}
D.~A. Ferhat, Z.~Kir{\'a}ly, A.~Seb{\H{o}}, G.~Stauffer, How many matchings
  cover the nodes of a graph?, Mathematical Programming (2022) 1--14\href
  {https://doi.org/https://doi.org/10.1007/s10107-022-01804-9}
  {\path{doi:https://doi.org/10.1007/s10107-022-01804-9}}.

\bibitem{exactmatching}
C.~H. Papadimitriou, M.~Yannakakis, The complexity of restricted spanning tree
  problems, J. ACM 29~(2) (1982) 285–309.
\newblock \href {https://doi.org/10.1145/322307.322309}
  {\path{doi:10.1145/322307.322309}}.

\end{thebibliography}

\end{document}